\documentclass[a4paper,11pt,reqno]{amsart}

\usepackage{amsmath}
\usepackage{amsthm}
\usepackage{amstext}
\usepackage{amsfonts}
\usepackage{graphicx}
\usepackage{dsfont}

\usepackage[usenames,dvipsnames]{pstricks}
\usepackage{epsfig}
\usepackage{pstricks-add}

\setlength{\textwidth}{16cm}
\setlength{\textheight}{20 cm}
\addtolength{\oddsidemargin}{-1.5cm}
\addtolength{\evensidemargin}{-1.5cm}
\numberwithin{equation}{section}

%%%%%%%%%%%%%%%%%%%%%%%%%%%%%%%%%%%%%%%%%%%%%%%%%%%%%%%%%%%%%%%%%%%%%%%
%%%%%%%%%%%%%%%%%%%%%%%%%%%%%%%%%%%%%%%%%%%%%%%%%%%%%%%%%%%%%%%%%%%%%%%
\newcommand{\R}{\mathbb R}
\newcommand{\C}{\mathbb C}
\newcommand{\N}{\mathbb N}

\newcommand{\be}{\begin{equation}}
\newcommand{\ee}{\end{equation}}
\newcommand{\ba}{\begin{eqnarray}}
\newcommand{\ea}{\end{eqnarray}}

\newcommand{\asy}{\stackrel{\textrm{as}}{<}}
%%%%%%%%%%%%%%%%%%%%%%%%%%%%%%%%%%%%%%%%%%%%%%%%%%%%%%%%%%%%%%%%%
%%%%%%%%%%%%%%%%%%%%%%%%%%%%%%%%%%%%%%%%%%%%%%%%%%%%%%%%%%%%%%%%%

%\newcommand{\seccion}[1]{\section{#1}\setcounter{equation}{0}}
%\renewcommand{\theequation}{\thesection.\arabic{equation}}
\newtheorem{theorem}{Theorem}[section]
\newtheorem{proposition}[theorem]{Proposition}
\newtheorem{remark}[theorem]{Remark}
\newtheorem{lemma}[theorem]{Lemma}
\newtheorem{corollary}[theorem]{Corollary}

%%%%%%%%%%%%%%%%%%%%%%%%%%%%%%%%%%%%%%%%%%%%%%%%%%%%%%%%%%%%%%%%
%%%%%%%%%%%%%%%%%%%%%%%%%%%%%%%%%%%%%%%%%%%%%%%%%%%%%%%%%%%%%%%%

\begin{document}

%\title[Draft 24/10/2011]{Null controllability of the structurally damped wave equation with moving point control}
\title[Reachable states for the heat equation]{On the reachable states for the boundary control of the heat equation}% and Borel theorem in Gevrey classes}

\author{Philippe Martin}
\address{Centre Automatique et Syst\`emes, MINES ParisTech, PSL Research University\\
60 boulevard Saint-Michel, 75272 Paris Cedex 06, France}
\email{philippe.martin@mines-paristech.fr}

\author{Lionel Rosier}
\address{Centre Automatique et Syst\`emes, MINES ParisTech, PSL Research University\\
60 boulevard Saint-Michel, 75272 Paris Cedex 06, France}
\email{lionel.rosier@mines-paristech.fr}

\author{Pierre Rouchon}
\address{Centre Automatique et Syst\`emes, MINES ParisTech, PSL Research University\\
60 boulevard Saint-Michel, 75272 Paris Cedex 06, France}
\email{pierre.rouchon@mines-paristech.fr}

\subjclass{}
\begin{abstract}                          % Abstract of not more than 200 words.
We are interested in the determination of the reachable states for the boundary control of the one-dimensional heat equation. 
We consider either one or two boundary controls. We show that reachable states associated with square integrable controls can be extended to analytic functions on
some square of $\C$, and conversely, that analytic functions defined on a certain disk can be reached by using boundary controls
that are Gevrey functions of order 2.   The method of proof combines the flatness approach with some new Borel interpolation theorem in some Gevrey class with
a specified value of the loss in the uniform estimates of the successive derivatives of the interpolating function. 
\end{abstract}

\keywords{Parabolic equation; Borel theorem; reachable states; exact controllability, Gevrey functions; flatness}

\maketitle
\section{Introduction}

The null controllability of the heat equation has been extensively studied since the seventies. After the pioneering work \cite{FR} in the one-dimensional case using
biorthogonal families, sharp results were obtained in the N-dimensional case by using elliptic Carleman estimates \cite{LR} or parabolic Carleman estimates \cite{FI}.
An exact  controllability to the trajectories was also derived, even for nonlinear systems \cite{FI}.

By contrast, the issue of the exact controllability of the heat equation (or of a more general semilinear parabolic equation) is not well understood. For the sake of simplicity, let us 
consider the following control system  
 \ba
\psi _t-\psi _{xx}= 0,&& x\in (0,1),\ t\in (0,T), \label{P1}\\
\psi (0,t)= h_0(t),&&  t\in (0,T), \label{P2}\\
\psi (1,t)= h_1(t),&&  t\in (0,T), \label{P3}\\
\psi (x,0) = \psi _0(x),&& x\in (0,1),\label{P4}
 \ea 
where $\psi _0\in L^2(0,1)$, and $h_0,h_1\in L^2(0,T)$.  

As \eqref{P1}-\eqref{P4} is null controllable, there is no loss of generality is assuming that $\psi _0\equiv 0$. A state $\psi _1$ is said to be {\em reachable (from $0$ in time $T$)} if we can find
two control inputs $h_0,h_1\in L^2(0,T)$ such that the solution $\psi $ of \eqref{P1}-\eqref{P4} satisfies
\be
\label{P7}
\psi (x,T)=\psi _1(x), \quad \forall x \in (0,1). 
\ee  
Let $ A \psi :=\psi ''$ with domain $D(A):=H^2(0,1)\cap H^1_0(0,1)\subset L^2(0,1)$, 
and let $e_n(x)=\sqrt{2}\sin (n\pi x)$ for  $n\ge 1$ and $x\in (0,1)$. As is well known,  $(e_n)_{n\ge 1}$ is an orthonormal basis in $L^2(0,1)$ constituted
of eigenfunctions of $A$. Decompose $\psi _1$ as 
\be
\psi _1(x)=\sum_{n=1}^\infty c_n e_n (x) =\sum_{n=1}^\infty c_n \sqrt{2} \sin (n\pi x) . 
\label{P10}
\ee  
Then, from \cite{FR}, if we have for some $\varepsilon >0$
\be
\label{P11}
\sum_{n=1}^\infty |c_n|n^{-1} e^{ (1+\varepsilon ) n\pi } <\infty , 
\ee
then $\psi _1$ is a reachable terminal state. Note that the condition \eqref{P11} implies that 
\begin{enumerate}
\item[(i)] the function $\psi _1$ is analytic in $D(0,1+\varepsilon ) :=\{ z\in \C ; \ |z| < 1+\varepsilon \} $, for the series in \eqref{P10}  
converges uniformly in $\overline{ D(0,r) }$ for all $r<1+\varepsilon$;
\item[(ii)] 
\be
\sum_{n=1}^\infty |c_n|^2n^{2k} <\infty,\quad \forall k\in \N
\label{P12}
\ee
(that is, $\psi _1\in \cap _{k\ge 0} D(A^k)$), and hence 
\be
\label{P13}
\psi _1^{(2n)} (0)=\psi _1^{(2n)} (1)=0,\quad \forall n\in \N .
\ee
\end{enumerate}
More recently, it was proved in \cite{EZ} that any state $\psi _1$ decomposed as in \eqref{P10} is reachable  if 
\be
\label{P14} 
\sum_{n\ge 1} |c_n|^2 n e^{ 2n\pi }  <\infty ,
\ee
which again implies \eqref{P12} and \eqref{P13}. 

It turns out that \eqref{P13} is a very conservative condition, which excludes most of the usual analytic functions. As a matter of fact, the only  polynomial function satisfying \eqref{P13} is the null one. 
On the other hand, the condition \eqref{P13} is not very natural, for there is no reason that $h_0(T)=h_1(T)=0$. We shall see  that the {\em only} condition required for $ \psi  _1$ 
to be reachable is the analyticity of $ \psi _1$ on a sufficiently large open set in $\C$. 

Notations: If $\Omega$ is an open set in $\C$, we denote by $H(\Omega )$  
the set of holomorphic (complex analytic) functions $f:\Omega \to\C$.

The following result gathers together some of the main results contained in this paper.

\begin{theorem}
\label{thmintro}
1. Let $z_0=\frac{1}{2}$. If $\psi _1\in H (D( z_0 ,R/2))$ with $R>R_0:=e^{ (2e)^{-1} }\sim 1.2$, then $\psi _1$ is reachable from $0$ in any time $T>0$. Conversely, any reachable state belongs to 
$H( \{ z=x+iy; \ | x-\frac{1}{2} |  + |y| < \frac{1}{2}  \} )$. \\
2. If $\psi _1\in H(D(0,R))$ with $R>R_0$ and $\psi _1$ is odd, then $\psi _1$ is reachable from 0 in any time $T>0$ with only one boundary control at $x=1$ (i.e. $h_0\equiv 0$).   
Conversely, any reachable state with only one boundary control at $x=1$ is odd and it belongs to $ H( \{ z=x+iy; \ |x|+|y|< 1 \} ) $. 
\end{theorem}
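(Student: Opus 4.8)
The plan is to handle both parts through the \emph{flatness} representation of solutions of \eqref{P1}. Writing a solution as a power series in space about the centre of symmetry gives
\[
\psi(x,t)=\sum_{n\ge 0}\frac{(x-\tfrac12)^{2n}}{(2n)!}\,y_0^{(n)}(t)+\sum_{n\ge 0}\frac{(x-\tfrac12)^{2n+1}}{(2n+1)!}\,y_1^{(n)}(t),
\]
where the two \emph{flat outputs} are $y_0(t)=\psi(\tfrac12,t)$ and $y_1(t)=\psi_x(\tfrac12,t)$, and plugging $x=0,1$ recovers the controls $h_0=\psi(0,\cdot)$, $h_1=\psi(1,\cdot)$. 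For Part~2 one imposes $h_0\equiv0$, i.e. $\psi(0,t)=0$; this forces the odd symmetry $\psi(-x,t)=-\psi(x,t)$ and leaves the single flat output $y(t)=\psi_x(0,t)$ with $\psi(x,t)=\sum_{n\ge0}\frac{x^{2n+1}}{(2n+1)!}y^{(n)}(t)$, manifestly odd in $x$.

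For the two direct (reachability) statements I would expand $\psi_1$ in its Taylor series about the centre of symmetry ($z_0=\tfrac12$ for Part~1, $0$ for Part~2); analyticity on the disk of radius $R/2$ (resp. $R$) makes the coefficients decay like $(2/R)^m$ (resp. $R^{-m}$). Matching $\psi(\cdot,T)=\psi_1$ amounts to prescribing $y_0^{(n)}(T),y_1^{(n)}(T)$ (resp. $y^{(n)}(T)$) as $(2n)!$ or $(2n+1)!$ times these coefficients, while starting from $\psi_0\equiv0$ amounts to prescribing all derivatives of the flat outputs to vanish at $t=0$. Realising these two-point jets by Gevrey-class-$2$ functions on $[0,T]$ is exactly the new Borel interpolation theorem: it produces flat outputs with $|y_0^{(n)}(t)|\le C\,M^n(2n)!$, so the spatial series converges for $|x-\tfrac12|<M^{-1/2}$. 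The controls require convergence at the active boundary points---$|x-\tfrac12|=\tfrac12$ in Part~1 (so $M<4$), $|x|=1$ in Part~2 (so $M<1$)---and since the interpolation inflates the data rate ($4/R^2$, resp.\ $1/R^2$) by the sharp loss factor $e^{1/e}$, both conditions collapse to $e^{1/e}/R^2<1$, i.e. $R>R_0=e^{1/(2e)}$.

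For the two converse statements I would instead use the kernel representation of the solution generated by $L^2$ boundary data. Extending $x$ into $\C$ and applying the method of images, $\psi(\cdot,T)$ is a superposition of Gaussians $e^{-(x-m)^2/(4(T-s))}$ centred at the integer reflections $m$ of the control points; as $s\uparrow T$ each such term stays integrable only where $|\mathrm{Im}\,x|<|\mathrm{Re}\,x-m|$, the nearest image being decisive. The connected component of the admissible region containing the physical interval is then $|\mathrm{Im}\,x|<\operatorname{dist}(\mathrm{Re}\,x,\Z)$ for Part~1, namely the square $|x-\tfrac12|+|y|<\tfrac12$, and $|\mathrm{Im}\,x|<\operatorname{dist}(\mathrm{Re}\,x,2\Z+1)$ for Part~2 (after the odd extension puts a control image at $x=-1$ as well), namely $|x|+|y|<1$; this yields the claimed analytic extensions, and the necessity of oddness in Part~2 is immediate from $\psi(0,t)=h_0\equiv0$.

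The main obstacle is the direct part, and specifically the sharp tracking of the loss in the interpolation. A crude Borel construction inflates the Gevrey constant uncontrollably and would force $R$ far above $R_0$; securing the optimal loss $e^{1/e}=R_0^2$---so that the gap between the sufficient condition (disk of radius $R/2>R_0/2$) and the necessary condition (the inscribed square of half-width $\tfrac12$) is as small as the method permits---is the delicate analytic point, and is precisely why a \emph{new} interpolation theorem, with a prescribed value of the loss, is required rather than the classical Borel theorem.
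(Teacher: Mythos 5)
Your proposal is correct and follows essentially the same route as the paper: the direct statements are proved exactly as in Theorems \ref{thm1}, \ref{thm11} and the two-sided result (flat outputs at the centre of symmetry, even/odd splitting, and the Borel interpolation of Proposition \ref{prop2} with loss $e^{1/e}=R_0^2$, yielding the same condition $e^{1/e}/R^2<1$, i.e. $R>R_0$). Your converse argument is also the paper's mechanism (Theorem \ref{thm5} and Corollary \ref{cor1}): the solution is represented by single-layer heat potentials, each analytic in the wedge $\{|\mathrm{Im}\, z|<|\mathrm{Re}\, z-m|\}$ around its singular point $m$, and intersecting the decisive wedges gives the stated squares --- your method-of-images phrasing is just a repackaging of the paper's use of Cannon's analyticity theorems after its interior-regularity reduction.
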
  

Thus, for given $a\in \R$, the function $\psi _1(x) :=[(x-\frac{1}{2})^2+a^2]^{-1}$ is reachable if $|a|>R_0/2\sim 0.6$, and it is not reachable if $|a|<1/2$. 

Figure \ref{fig1} is concerned with the reachable states associated with two boundary controls at $x=0,1$:   any reachable state can be extended to the red square as a (complex) analytic function; conversely, the restriction to $[0,1]$ of any analytic function on a disc containing the blue one is a reachable state.

\begin{figure}
\begin{center}
\includegraphics[width=0.3\textwidth]{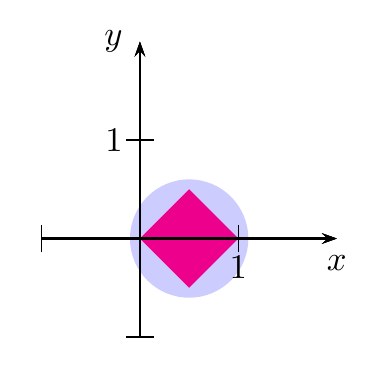} 
\end{center}
\caption{$\{ |x-\frac{1}{2} |+|y|<\frac{1}{2} \}$ (red) and  $D(z_0, \frac{R_0}{2} )$ (blue)}
\label{fig1}
\end{figure}

Figure \ref{fig2} is concerned with the reachable states associated with solely one boundary control at $x=1$:   any reachable state can be extended to the red square as an analytic (odd) function; conversely, the restriction to $[0,1]$ of any analytic (odd) function on a disc containing the blue one is a reachable state. 

\begin{figure}
\begin{center}
\includegraphics[width=0.3\textwidth]{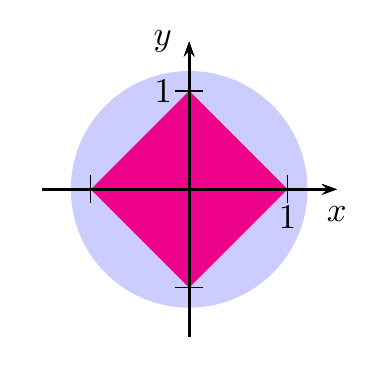} 
\end{center}
\caption{$\{ |x|+|y|<1 \}$ (red) and  $D(0, R_0)$ (blue)}
\label{fig2}
\end{figure}

The proof of Theorem \ref{thmintro} does not rely on the study of a moment problem as in \cite{FR}, or on the duality approach involving some observability inequality for the adjoint problem 
\cite{EZ,FI}. It is based on the flatness approach introduced in \cite{laroche,LMR}  for the motion planning of the one-dimensional heat equation between ``prepared'' states 
(e.g.  the {\em steady} states). Since then, the flatness approach was extended to deal with the null controllability of the heat equation on cylinders, yielding accurate numerical approximations   
of both the controls and the trajectories \cite{MRRautomatica,meurer}, and to give new null controllability 
 results for parabolic equations with discontinuous coefficients that may be degenerate or singular \cite{MRRsiam}. 

For system \eqref{P1}-\eqref{P4} with $h_0\equiv 0$, the flatness approach consists in expressing the solution $\psi $ (resp. the control) in the form
\be
\label{P20}
\psi (x,t)=\sum_{i\ge 0} z^{(i)} (t) \frac{ x^{2i+1} } {(2i+1)! } , \quad h_1(t) =\sum_{i=0}^\infty \frac{ z^{ (i) } (t) }{ (2i+1) ! },
\ee 
where $z\in C^\infty ([0,T])$ is designed in such a way that: (i) the series in \eqref{P20} converges for all $t\in [0,T]$;
(ii) $z^{(i)}(0)=0$ for all $i\ge 0$; and (iii)
\[
\sum_{i=0}^\infty z^{ (i) }(T) \frac{ x^{2i+1} }{ (2i+1)! } =\psi _1(x) \quad \forall x\in (0,1).
\]
If $\psi _1$ is analytic in an open neighborhood of $\{z;  |z|\le 1\}$ and $\psi _1$ is odd, then $\psi _1$ can be written as
\be
\psi _1(x)=\sum_{i=0}^\infty d_i \frac{ x^{2i+1} }{ (2i+1)!}
\label{P25}
\ee
with 
\be
\label{P30}
|d_{i} | \le C \frac{ (2i) !}{R^{ 2i } }
\ee
for some $C>0$  and $R>1$. Thus $\psi _1$ is reachable provided that we can find a  function $z\in C^\infty ([0,T])$ fulfilling the conditions 
\ba
z^{ (i) } (0) &=& 0, \quad \forall i\ge 0, \label{P31}\\
z^{( i) } (T) &=& d_i, \quad \forall i\ge 0, \label{P32}\\
|z^{ (i) } (t) | &\le&  C \left(  \frac{\rho }{R} \right) ^{2i} (2i) ! \quad \forall i\ge 0, \ \forall t\in [0,T], \label{P33} 
\ea
for some constants $C>0$ and $\rho \in (1,R)$.
 
 A famous result due to Borel \cite{borel}   asserts that one can find a function $z\in C^\infty ([0,T])$ satisfying \eqref{P32}. The condition \eqref{P31} can 
 easily be imposed by multiplying $z$ by a convenient cutoff function. Thus, the main difficulty in this approach comes from condition \eqref{P33}, which tells us 
 that the derivatives of the function $z$ (Gevrey of order 2) grow in almost the same way as the $d_i$'s for $t\ne T$.   
 
The Borel interpolation problem in Gevrey classes (or in more general non quasianalytic classes) has been considered e.g. in \cite{CC,LMS,MR,petzsche,ramis,sanz}. 
The existence of a constant $\rho >1$ (that we shall call the {\em loss})  for which \eqref{P33} holds for any $R>0$ and any sequence $(d_i)_{i\ge 0}$ as in 
\eqref{P30}, was proved in those references. Explicit values of $\rho$ were however not provided so far. On the other hand, to the best knowledge of the authors, 
the issue of the determination of the optimal value of $\rho$, 
for any sequence $(d_i)_{i\ge 0}$ or for a given sequence $(d_i)_{i\ge 0}$ as in \eqref{P30}, was not addressed so far. We stress that this issue is crucial here, 
for the convergence of the series in \eqref{P20} requires $R>\rho$: sharp results for the reachable states require sharp results for $\rho$.
  
There are roughly two ways to derive a Borel interpolation theorem in a Gevrey class. 
The {\em complex variable} approach, as e.g. in \cite{LMS,MR,thilliez95}, results in the construction of an interpolating function 
which is complex analytic in a sector of $\C$. It will be used here to derive an interpolation result {\em without loss}, but for a {\em restricted} class of sequences $(d_i)_{i\ge 0} $
(see below Theorem \ref{thm2}). The {\em real variable} approach, as in \cite{AH,petzsche}, yields an infinitely differentiable function of the real variable $x$ only. In \cite{petzsche}, 
Petzsche constructed an interpolating function with the aid of a cut-off function obtained by repeated convolutions of step functions \cite[Thm 1.3.5]{hormander1}.  
Optimizing the constants in Petzsche's construction of the interpolating
function, we shall obtain an interpolation result with as a loss $\rho =R_0\sim 1.2$ (see below Proposition \ref{prop2}).   
 
The paper is outlined as follows.  Section 2 is concerned with the necessary conditions for a state to be reachable (Theorem \ref{thm5}). Section 3 is mainly concerned with
the interpolation problem \eqref{P32}-\eqref{P33}. An interpolation result obtained by the real variable approach 
with as a loss $\rho=R_0\sim 1.2$ is given in Proposition \ref{prop2}. This interpolation result is next applied to the problem 
of the determination of the set of reachable states, first with only one control (the other homogeneous boundary condition being of Neumann or of Dirichlet type), and next with
two boundary controls of Robin type. The section ends with the application of the  complex variable approach to Borel interpolation problem. An interpolation result without loss is derived
(Theorem \ref{thm2}), thanks to which we can exhibit reachable states for \eqref{P1}-\eqref{P4} analytic in $D(1/2,R)$ with $R>1/2$ arbitrarily close to $1/2$. The paper ends with a section providing some 
concluding remarks and some open questions. Two appendices give some additional material. 
   
{\em Notations:}  A function $y\in C^\infty ([t_1,t_2])$ is said to be {\em Gevrey of order $s\ge 0$ on $[t_1,t_2]$} if there exist some constants $C,R>0$ such that 
\[ |y^{ (p)} (t) | \le C \displaystyle\frac{p!^s}{R^p}, \quad \forall p\in \N , \ \forall t\in [t_1,t_2]. \]
The set of functions Gevrey of order $s$ on $[t_1,t_2]$ is denoted $G^s([t_1,t_2])$.
 
A function $\theta \in C^\infty ( [x_1,x_2] \times [t_1,t_2])$ is said to be {\em Gevrey of order $s_1$ in $x$ and $s_2$ in $t$ on $[x_1,x_2]\times [t_1,t_2]$}  if there exist some constants $C,R_1,R_2>0$ such that 
\[
 | \partial _x^{p_1}\partial _t^{p_2} \theta (x,t) |  \le C  \frac{ (p_1 !)^{s_1}  (p_2 !)^{s_2} }{R_1^{p_1} R_2^{p_2}  } \quad \forall p_1,p_2\in \N,\ \forall (x,t) \in [x_1,x_2]\times [t_1,t_2].
\]
The set of functions Gevrey of order $s_1$  in $x$ and $s_2$ in $t$ on $[x_1,x_2]\times [t_1,t_2]$ is denoted $G^{ s_1,s_2 }( [x_1 , x_2] \times [t_1,t_2] )$. 

\section{Necessary conditions for reachability}
In this section, we are interested in deriving necessary conditions for a state function to be reachable from 0. More precisely, we assume given $T>0$ 
and we consider any solution $\psi $ of the heat equation in $(-1,1)\times (0,T)$: 
\begin{equation}
\label{K1}
\psi _t  -\psi _{xx}=0,  \quad x\in (-1,1), \ t\in (0,T).
\end{equation}
Let us introduce the rectangle 
\[
{\mathcal R} :=\{ z=x+iy\in \C ; \ |x|+|y|<1\}. 
\]
The following result gives a necessary condition for a state to be reachable, regardless of the kind of boundary control that is applied.
It extends slightly a classical result due to Gevrey for continuous Dirichlet controls (see \cite{cannon,gevrey}). 
\begin{theorem} 
\label{thm5}
Let $T>0$ and let $\psi$ denote any solution of \eqref{K1}. Then $\psi (.,T')\in H({\mathcal R})$ for all $T'\in (0,T)$. 
\end{theorem}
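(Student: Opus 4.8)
The plan is to derive a heat‑kernel (Green) representation of $\psi(\cdot,T')$ on an interior sub‑rectangle, to complexify the space variable $x\mapsto z=x+iy$ in that formula, and to observe that the complexified representation defines a holomorphic function precisely on $\mathcal R$, the diamond arising from the two lateral endpoints. First, since the heat operator is hypoelliptic, $\psi\in C^\infty((-1,1)\times(0,T))$, so there is no interior regularity issue. Fix $T'\in(0,T)$, $\varepsilon\in(0,1)$ and $\tau\in(0,T')$, let $G(x,t):=(4\pi t)^{-1/2}\exp(-x^2/(4t))$ be the Gaussian kernel, and set $v(\xi,s):=G(x-\xi,T'-s)$, which solves the backward (adjoint) heat equation in $(\xi,s)$ and satisfies $v(\cdot,s)\to\delta_x$ as $s\to T'^-$. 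Applying the space–time Green identity to $\psi$ and $v$ on $[-1+\varepsilon,\,1-\varepsilon]\times[\tau,T']$ and passing to the limit $s\to T'$ in the reproducing term gives, for $x\in(-1+\varepsilon,1-\varepsilon)$,
\[
\psi(x,T')=\int_{-1+\varepsilon}^{1-\varepsilon} G(x-\xi,T'-\tau)\,\psi(\xi,\tau)\,d\xi+\int_\tau^{T'}\!\Big[G(x-\xi,T'-s)\,\psi_\xi(\xi,s)+G_x(x-\xi,T'-s)\,\psi(\xi,s)\Big]_{\xi=-1+\varepsilon}^{\xi=1-\varepsilon}ds,
\]
where $G_x=\partial_x G$. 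All the data appearing here, namely $\psi(\cdot,\tau)$ and the traces $\psi(\pm(1-\varepsilon),\cdot)$, $\psi_\xi(\pm(1-\varepsilon),\cdot)$, are evaluated at interior points and are therefore smooth and bounded.

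Next I complexify, replacing $x$ by $z=x+iy$. For each fixed $t>0$ the kernel $G(z,t)=(4\pi t)^{-1/2}\exp(-z^2/(4t))$ is entire in $z$, so the bulk integral, taken over the compact interval $[-1+\varepsilon,1-\varepsilon]$ with $T'-\tau>0$ fixed, is an entire function of $z$ and is harmless. The boundary integrals are the decisive terms: as $s\to T'^-$ one has $T'-s\to0$, and
\[
\big|G(z\mp(1-\varepsilon),T'-s)\big|=(4\pi(T'-s))^{-1/2}\exp\!\Big(-\mathrm{Re}\big[(z\mp(1-\varepsilon))^2\big]/(4(T'-s))\Big).
\]
This stays integrable in $s$ near $T'$ exactly when $\mathrm{Re}[(z-(1-\varepsilon))^2]>0$ and $\mathrm{Re}[(z+(1-\varepsilon))^2]>0$; writing $z=x+iy$ these read $|y|<|x-(1-\varepsilon)|$ and $|y|<|x+(1-\varepsilon)|$, that is $|x|+|y|<1-\varepsilon$. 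Thus the two lateral endpoints $\pm(1-\varepsilon)$ contribute exactly the two pairs of sides of the shrunken diamond, and the boundary terms are well defined precisely on $\{|x|+|y|<1-\varepsilon\}$; the factor $G_x=-\tfrac{w}{2t}G$ carries only an extra power of $(T'-s)^{-1}$ and is absorbed by the same exponential.

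On any compact subset $K$ of this shrunken diamond there is $c>0$ with $\mathrm{Re}[(z\mp(1-\varepsilon))^2]\ge c$ for all $z\in K$, so the complexified kernels and their $\xi$‑derivatives are dominated, uniformly in $z\in K$, by an $s$‑integrable factor of the form $C(T'-s)^{-1}e^{-c/(4(T'-s))}$. Since the integrands are holomorphic in $z$ for each $s<T'$, differentiation under the integral sign (equivalently, Morera's theorem combined with Fubini) shows that the boundary integrals, hence the whole right‑hand side, are holomorphic in $z$ on $\{|x|+|y|<1-\varepsilon\}$. This holomorphic function agrees with $\psi(\cdot,T')$ on the real segment $(-1+\varepsilon,1-\varepsilon)$, so by uniqueness of analytic continuation it is the sought extension. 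Letting $\varepsilon\downarrow0$ and invoking uniqueness once more to patch the extensions yields $\psi(\cdot,T')\in H(\mathcal R)$.

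The step I expect to be the main obstacle is the rigorous justification of holomorphy of the boundary integrals up to the singular time $s=T'$: one must control the complexified Gaussians $G(z\mp(1-\varepsilon),T'-s)$ and $G_x(z\mp(1-\varepsilon),T'-s)$ uniformly on compact subsets of the diamond as $s\to T'^-$, and verify that the convergence theorems apply through the singularity. The sign condition $\mathrm{Re}[(z\mp1)^2]>0\Leftrightarrow z\in\mathcal R$ is exactly what rescues integrability, and it is also what singles out the diamond $\mathcal R$ rather than a disc or a strip.
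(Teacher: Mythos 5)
Your proof is correct, and it takes a genuinely different, more self-contained route than the paper's. The paper also begins by localizing to a sub-rectangle via interior (hypoelliptic/Gevrey) regularity, but from there it leans on Cannon's book as a black box: it writes the solution of the resulting initial-boundary-value problem as a free-space term plus two single-layer heat potentials whose densities $(\phi_0,\phi_1)$ are only known implicitly, as the solution of a Volterra integral system (Cannon, Thm.\ 6.5.1), and it then cites Cannon's Theorems 10.2.1 and 10.4.1 for the analyticity of the free-space term (entire) and of each lateral potential in the one-sided cones $\{x>-1+\varepsilon,\ |y|<|x+1-\varepsilon|\}$ and $\{x<1-\varepsilon,\ |y|<|x-1+\varepsilon|\}$, whose intersection is the shrunken diamond. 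You instead use the Green identity with the backward kernel, so your layer densities are the \emph{explicitly known} traces $\psi$, $\psi_\xi$ at $\pm(1-\varepsilon)$ --- no integral equation to solve --- and you establish holomorphy by hand, dominating the complexified Gaussians on compact subsets of the diamond and invoking Morera/Fubini through the singular time $s=T'$. Both arguments rest on exactly the same mechanism, the sign condition $\mathrm{Re}\bigl[(z\mp(1-\varepsilon))^2\bigr]>0$; the paper's route buys brevity by outsourcing the estimates, while yours buys transparency about why the natural domain is the diamond and avoids any solvability theory for the boundary densities.

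One small inaccuracy, harmless here: the conjunction of $|y|<|x-(1-\varepsilon)|$ and $|y|<|x+(1-\varepsilon)|$ is \emph{not} the set $\{|x|+|y|<1-\varepsilon\}$; it also contains the two outer cones $\{x>1-\varepsilon,\ |y|<x-(1-\varepsilon)\}$ and $\{x<-(1-\varepsilon),\ |y|<|x|-(1-\varepsilon)\}$. The diamond is the connected component of that intersection containing the real segment, which is all your analytic-continuation argument requires, and since your domination and Morera argument is carried out on compact subsets of the diamond itself, the proof stands as written.
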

\begin{proof}
Pick any $\varepsilon >0$ with  $\varepsilon <\min (1, T/2)$.
From \eqref{K1} and a classical interior regularity result (see e.g. \cite[Thm 11.4.12]{hormander2}), we know that 
$\psi \in G^{1,2}( [-1+\varepsilon , 1-\varepsilon ] \times [\varepsilon, T -\varepsilon ])$. Let $h_0(t):=\psi(-1+\varepsilon, t+\varepsilon )$, 
$h_1(t):=\psi(1-\varepsilon, t + \varepsilon)$, $u_0(x):=\psi (x,\varepsilon)$, and $u(x,t):=\psi (x, t+\varepsilon )$. Then 
$u\in G^{1,2} ([-1+\varepsilon , 1-\varepsilon ] \times [0, T-2\varepsilon ])$ is the unique solution
to the following initial-boundary-value problem  
\ba
u_t  -u_{xx}=0, && x\in (-1+\varepsilon, 1-\varepsilon  ), \ t\in (0,T-2\varepsilon ), \label{K11}\\
u(-1 + \varepsilon , t )  =h_0(t), &&t\in (0,T-2\varepsilon ), \label{K12}\\
u(1-\varepsilon ,t)  =h_1(t),&&t\in (0,T-2\varepsilon ), \label{K13}\\
u(x,0)=u_0(x), && x\in (-1+ \varepsilon ,1-\varepsilon ). \label{K14} 
\ea 

Let 
\[
K(x,t):=\frac{1}{ \sqrt{4\pi t} } \exp (-\frac{x^2}{4t} ),\quad x\in \R,\  t>0, 
\]
denote the fundamental solution of the heat equation. By \cite[Theorem 6.5.1]{cannon} (with some obvious change to fit our $x-$domain), 
the solution $u$ of \eqref{K11}-\eqref{K14} 
 can be written as
\[
u(x,t)=  v(x,t) -2\int_0^t \frac{\partial K}{\partial x} (x+1-\varepsilon , t-s)\phi _0 (s) ds   + 2\int_0^t \frac{\partial K}{\partial x} (x-1+\varepsilon , t-s)\phi _1 (s) ds 
\]
where 
\[
v(x,t) =\int_{-\infty} ^\infty K(x-\xi, t)\tilde u_0(\xi)d\xi,
\]
$\tilde u_0$ denoting any smooth, bounded extension of $u_0$ outside of $ -1 + \varepsilon  \le x\le 1 - \varepsilon$, and where 
the pair $(\phi _1,\phi _2)$ solves the system
\ba
h_0(t) &=& v(-1+\varepsilon, t) +  \phi _0 (t) +2\int_0^t  \frac{\partial K}{\partial x} (-2 + 2\varepsilon , t-s) \phi _1 (s) ds,  \label{K21}\\
h_1(t) &=& v(1-\varepsilon, t) + \phi _1 (t) -2\int_0^t  \frac{\partial K}{\partial x} (2-2\varepsilon , t-s) \phi _0 (s) ds. \label{K22}
\ea 
Since $h_0,h_1,v(-1+\varepsilon, .),v(1-\varepsilon, .)\in C([0,T-2\varepsilon ])$, it is well known (see e.g. \cite{cannon}) that the system \eqref{K21}-\eqref{K22}
has a unique solution $(\phi_0,\phi_1)\in C([0,T-2\varepsilon ])^2$. Furthermore, for any $t\in (0,T-2\varepsilon)$, we have that
\begin{itemize}
\item $v(z,t)$ is an entire analytic function in $z$ by \cite[Theorem 10.2.1]{cannon};
\item $\int_0^t \frac{\partial K}{\partial x} (z+1-\varepsilon , t-s)\phi _0 (s) ds$ is analytic in the variable $z$ in the domain $\{ z=x+iy;\ x> -1+\varepsilon, \ |y|< |x+1-\varepsilon | \}$ 
by \cite[Theorem 10.4.1]{cannon};
\item $\int_0^t \frac{\partial K}{\partial x} (z-1+\varepsilon , t-s)\phi _1 (s) ds$ is analytic in the variable $z$ in the domain $\{ z=x+iy;\ x <1-\varepsilon, \ |y|< |x-1+\varepsilon | \}$
by \cite[Corollary 10.4.1]{cannon}.
\end{itemize}
It follows that for $0<t<T-2\varepsilon$,  $z\to u(z,t)$ is analytic in the domain 
\[
{\mathcal R}_\varepsilon :=\{ z=x+iy;\  |x|+|y| <1-\varepsilon \}.
\]
Pick any $T' \in (0,T)$, and pick $\varepsilon <\min (1, T/2, T',T-T')$. Then $T'-\varepsilon \in (0,T-2\varepsilon ) $ and $z\to \psi (z,T')=u(z,T' - \varepsilon )$ is analytic in 
${\mathcal R}_\varepsilon$.   As $\varepsilon$ can be chosen arbitrarily small, we conclude that $z\to \psi (z,T')$ is analytic in $\mathcal R$. 
\end{proof}
Pick any $(\alpha , \beta )\in \R ^2\setminus \{ (0,0) \}$, and  consider the system
\ba
\psi _t -\psi _{xx}=0, \quad && x\in (0,1), \ t\in (0,T), \label{L1}\\
\alpha \psi (1,t) + \beta \psi _x(1,t) =h(t),&&t\in (0,T), \label{L2}\\
\psi (x,0)=\psi _0(x), && x\in (0,1), \label{L3} 
\ea 
supplemented with either the homogeneous Dirichlet condition 
\begin{equation}
\psi (0,t)=0,\quad t\in (0,T),
\label{L4}
\end{equation}
or the homogeneous Neumann condition
\begin{equation}
\psi _x(0,t)=0,\quad t\in (0,T).
\label{L5}
\end{equation}
Then we have the following result. 
\begin{corollary}
\label{cor1}
Let $T>0$, $\psi _0\in L^2(0,1)$ and $h\in L^2(0,T)$. Then the solution $\psi$ of \eqref{L1}-\eqref{L3} and \eqref{L4}  (resp. \eqref{L1}-\eqref{L3}  and \eqref{L5}) is such that  for all $T'\in (0,T)$, the map
$z\to \psi (z,T')$ is analytic in $\mathcal R$ and odd (resp. even).  
\end{corollary}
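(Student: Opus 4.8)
The plan is to reduce both cases to Theorem~\ref{thm5} by a reflection argument across $\{x=0\}$. Consider first the Dirichlet case \eqref{L4}. I would extend $\psi$ to the wider strip $(-1,1)\times(0,T)$ by odd reflection, setting $\tilde\psi(x,t):=\psi(x,t)$ for $x\in[0,1)$ and $\tilde\psi(x,t):=-\psi(-x,t)$ for $x\in(-1,0)$. In the Neumann case \eqref{L5}, I would instead use the even reflection $\tilde\psi(x,t):=\psi(-x,t)$ for $x\in(-1,0)$. The objective is to show that $\tilde\psi$ is a genuine solution of the heat equation \eqref{K1} on $(-1,1)\times(0,T)$: once this is established, Theorem~\ref{thm5} applies verbatim to $\tilde\psi$ (recall that Theorem~\ref{thm5} requires nothing about the boundary conditions at $x=\pm1$, only that the equation holds in the interior).

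First I would record the relevant regularity. By parabolic smoothing up to the boundary $\{x=0\}$ (which is legitimate because the boundary condition there is homogeneous, hence smooth), the solution $\psi$ of \eqref{L1}-\eqref{L3} associated with $L^2$ data and an $L^2$ control is $C^\infty$ on a neighborhood of $\{x=0\}$ for positive times. Next I would check that the reflected function is $C^\infty$ across the interface. In the Dirichlet case this rests on the vanishing of all even-order spatial derivatives at $x=0$: from $\psi(0,t)=0$ one obtains $\partial_t^k\psi(0,t)=0$ for every $k$, and the equation gives $\partial_x^{2k}\psi=\partial_t^k\psi$, whence $\partial_x^{2k}\psi(0,t)=0$, the odd-order derivatives matching automatically. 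This makes the odd extension $C^\infty$ across $\{x=0\}$, and since $\tilde\psi_t-\tilde\psi_{xx}=-(\psi_t-\psi_{xx})(-x,t)=0$ on the left half, $\tilde\psi$ is then a classical solution on all of $(-1,1)\times(0,T)$. The Neumann case is symmetric: $\psi_x(0,t)=0$ forces $\partial_x^{2k+1}\psi(0,t)=0$, so the even extension is smooth across $\{x=0\}$.

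With $\tilde\psi$ established as a solution of \eqref{K1}, Theorem~\ref{thm5} yields that $z\mapsto\tilde\psi(z,T')$ is analytic on $\mathcal R$ for every $T'\in(0,T)$. Since $\tilde\psi(\cdot,T')$ coincides with $\psi(\cdot,T')$ on $(0,1)$, this provides the sought analytic extension of $\psi(\cdot,T')$ to $\mathcal R$. For the parity statement I would invoke the identity principle: by construction $\tilde\psi(\cdot,T')$ is odd (resp. even) as a function of the real variable $x$, and $\mathcal R$ is symmetric under $z\mapsto-z$, so the analytic functions $z\mapsto\tilde\psi(-z,T')$ and $z\mapsto-\tilde\psi(z,T')$ (resp. $z\mapsto\tilde\psi(z,T')$) agree on $(-1,1)$ and therefore on all of $\mathcal R$.

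The main obstacle I expect is the rigorous justification of the reflection principle across $\{x=0\}$, namely that the reflected function is not merely continuous but a bona fide solution of the heat equation there. The derivative-matching computation above is one way to see this, but a cleaner route would be to verify directly that $\tilde\psi$ is a distributional solution on $(-1,1)\times(0,T)$ --- an integration by parts shows that the boundary terms at $x=0$ cancel, precisely because the Dirichlet condition kills the $\psi$-term (resp. the Neumann condition kills the $\psi_x$-term) --- and then to invoke the hypoellipticity of the heat operator to upgrade $\tilde\psi$ to a classical $C^\infty$ solution, thereby bypassing the explicit computation of all derivatives at the interface.
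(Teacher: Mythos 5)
Your proposal is correct and follows essentially the same route as the paper: extend $\psi$ to $(-1,1)\times(0,T)$ by odd (resp.\ even) reflection, verify that the reflected function solves \eqref{K1}, apply Theorem~\ref{thm5}, and obtain the parity on all of $\mathcal R$ by analytic continuation. The only difference is that you carefully justify the smoothness of the reflected solution across $\{x=0\}$ (via the vanishing of the even-order, resp.\ odd-order, spatial derivatives, or alternatively via hypoellipticity), a point the paper dismisses as ``easily seen''; incidentally, your formula $\tilde\psi(x,t)=-\psi(-x,t)$ for the Dirichlet case corrects a sign typo in the paper's displayed odd extension.
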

\begin{proof}
Let $\psi$ denote the solution of \eqref{L1}-\eqref{L3} and \eqref{L4}. Extend $\psi$ to $(-1,1)\times (0,T)$
as an odd function in $x$; that is, set 
\[
\psi (x,t)=\psi(-x,t), \quad  x\in (-1,0),\ t\in (0,T). 
\]
Then it is easily seen that $\psi$ is smooth in $(-1,1)\times (0,T)$ and that it satisfies \eqref{K1}. The conclusion follows from Theorem \ref{thm5}. 
(Note that $\psi (z,T')$ is odd in $z$  for $z\in (-1,1)$, and also  for all $z\in \mathcal R$ by analytic continuation.)
When $\psi$ denotes the solution of \eqref{L1}-\eqref{L3} and \eqref{L5}, we proceed similarly by extending $\psi$ to $(-1,1)\times (0,T)$
as an even function in $x$. 
\end{proof}
\section{Sufficient conditions for reachability}
\subsection{Neumann control}
Let us consider first the Neumann control of the heat equation. We consider the control system
\ba
\theta _t -\theta _{xx}=0, && x\in (0,1), \ t\in (0,T), \label{A1}\\
\theta _x(0,t)=0, \ \theta _x(1,t) =h(t),&&t\in (0,T), \label{A2}\\
\theta (x,0)=\theta _0(x), && x\in (0,1). \label{A3} 
\ea 
We search a solution in the form 
\be
\theta (x,t) = \sum_{i\ge 0} \frac{x^{2i}}{(2i)!} y^{(i)} (t)  \label{A4}
\ee
where $y\in G^2([0,T])$. 
\begin{proposition}
\label{prop1}
Assume that for some constants $M>0$, $R>1$, we have 
\be
|y^{(i)} (t)| \le M \frac{(2i)!}{R^{2i}} \qquad \forall i\ge 0, \ \forall t\in [0,T]. \label{A5} 
\ee 
Then the function $\theta$ given in \eqref{A4} is well defined on $[0,1]\times [0,T]$, and 
%Gevrey of order $1$ 
%in $x$ and $2$ in $t$ in $[0,1]\times [0,T]$. That is, there exist some positive constants $C,R_1,R_2$ such that 
\be
\label{D1}
\theta \in G^{1,2}([0,1]\times [0,T]).
%|\partial _t^m\partial _x^n \theta (x,t) | \le C \frac{(m!)^2}{R_1^m}\frac{n!}{R_2^n} \qquad \forall m,n\in \N, \ \forall (x,t) \in [0,1]\times [0,T]. 
\ee
\end{proposition}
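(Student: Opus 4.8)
The plan is to exploit the fact that \eqref{A4} is, for each fixed $t$, a power series in $x$ whose radius of convergence is controlled by $R$, and to reduce the joint Gevrey estimate to a bound on the pure time derivatives followed by Cauchy estimates in the space variable. First I would record the elementary convergence bound: using \eqref{A5}, each term satisfies $\abs{\frac{x^{2i}}{(2i)!}y^{(i)}(t)}\le M(x/R)^{2i}$, so for $x\in[0,1]$ and $R>1$ the series converges uniformly on $[0,1]\times[0,T]$ and $\theta$ is well defined. The same estimate, read for complex $x=z$, shows that for each $t$ the map $z\mapsto\Theta(z,t):=\sum_{i\ge 0}\frac{z^{2i}}{(2i)!}y^{(i)}(t)$ is holomorphic on $\abs{z}<R$ and that, for any fixed $\rho\in(1,R)$, one has $\abs{\Theta(z,t)}\le M(1-(\rho/R)^2)^{-1}$ uniformly for $\abs{z}\le\rho$ and $t\in[0,T]$.

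Next I would bound the pure time derivatives uniformly on the disk $\abs{z}\le\rho$. Differentiating term by term (legitimate by uniform convergence on compact subsets of $\abs{z}<R$, using \eqref{A5} applied to $y^{(i+p_2)}$) gives $\partial_t^{p_2}\Theta(z,t)=\sum_{i\ge 0}\frac{z^{2i}}{(2i)!}y^{(i+p_2)}(t)$, whence for $\abs{z}\le\rho$
\[
\abs{\partial_t^{p_2}\Theta(z,t)}\le M\sum_{i\ge 0}\frac{\rho^{2i}}{(2i)!}\,\frac{(2i+2p_2)!}{R^{2i+2p_2}}=\frac{M}{R^{2p_2}}\sum_{i\ge 0}\frac{(2i+2p_2)!}{(2i)!}\Bigl(\frac{\rho}{R}\Bigr)^{2i}.
\]
The heart of the matter—and the step I expect to be the main obstacle—is to estimate this last series without losing a spurious factor $2^{p_2}$ that a crude bound such as $(a+b)!\le 2^{a+b}a!\,b!$ would introduce; such a loss would force $R>2$ and destroy the sharp dependence on $R$. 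The clean way is the generating-function identity $\sum_{n\ge 0}\frac{(n+m)!}{n!}u^n=m!(1-u)^{-(m+1)}$ with $m=2p_2$ and $u=\rho/R<1$: since the even-index sum is dominated by the full sum of nonnegative terms, one gets $\sum_{i\ge 0}\frac{(2i+2p_2)!}{(2i)!}(\rho/R)^{2i}\le (2p_2)!\,(1-\rho/R)^{-(2p_2+1)}$. Combining this with $(2p_2)!\le 4^{p_2}(p_2!)^2$ yields
\[
\abs{\partial_t^{p_2}\Theta(z,t)}\le\frac{MR}{R-\rho}\,\frac{(p_2!)^2}{R_2^{p_2}},\qquad R_2:=\frac{(R-\rho)^2}{4}>0,
\]
uniformly for $\abs{z}\le\rho$ and $t\in[0,T]$.

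Finally I would recover the spatial derivatives by Cauchy's formula. For $x\in[0,1]$ choose the circle $\abs{\zeta-x}=r$ with $r:=\rho-1>0$; since $\rho>1$ this circle lies in $\abs{\zeta}\le\rho$, so the previous bound applies on it. As $z\mapsto\partial_t^{p_2}\Theta(z,t)$ is holomorphic, Cauchy's estimate gives $\abs{\partial_x^{p_1}\partial_t^{p_2}\theta(x,t)}\le\frac{p_1!}{r^{p_1}}\max_{\abs{\zeta-x}=r}\abs{\partial_t^{p_2}\Theta(\zeta,t)}$, and inserting the bound above produces exactly
\[
\abs{\partial_x^{p_1}\partial_t^{p_2}\theta(x,t)}\le C\,\frac{(p_1!)\,(p_2!)^2}{R_1^{p_1}R_2^{p_2}},\qquad R_1:=\rho-1,\ \ C:=\frac{MR}{R-\rho},
\]
for all $p_1,p_2\in\N$ and all $(x,t)\in[0,1]\times[0,T]$, which is precisely \eqref{D1}. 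The smoothness of $\theta$ follows from the joint analyticity in $z$ together with the uniform convergence of all the termwise $t$-derivatives; the two Gevrey radii $R_1=\rho-1$ and $R_2=(R-\rho)^2/4$ may then be balanced by tuning $\rho\in(1,R)$.
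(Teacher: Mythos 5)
Your proof is correct, and it takes a genuinely different route from the paper's. The paper works entirely on the real side: it estimates the double series of termwise derivatives $\partial_t^m\partial_x^n[x^{2i}y^{(i)}(t)/(2i)!]=x^{2i-n}y^{(i+m)}(t)/(2i-n)!$ directly, setting $j=2i-n$ and $N=n+2m$, splitting the $j$-sum into blocks of length $N$, and using Stirling's formula to reach a bound of the form $M''(m!)^2n!/(R_1^mR_2^n)$; that block decomposition is what allows the paper to require only $R>1$. You instead exploit analyticity in $x$: you estimate only the pure $t$-derivatives of the holomorphic extension $\Theta(\cdot,t)$ on a disk $|z|\le\rho$ with $1<\rho<R$, via the generating-function identity $\sum_{n\ge 0}\frac{(n+m)!}{n!}u^n=m!(1-u)^{-(m+1)}$ --- your substitute for the paper's block decomposition, and your remark that the crude bound $(a+b)!\le 2^{a+b}a!\,b!$ would force $R>2$ is exactly the right diagnosis --- and then you recover all $x$-derivatives at once by Cauchy's estimate on circles of radius $\rho-1$ centered at points of $[0,1]$. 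Your approach is shorter, yields clean explicit constants ($C=MR/(R-\rho)$, $R_1=\rho-1$, $R_2=(R-\rho)^2/4$), and makes the sharpness of the hypothesis $R>1$ transparent (the Cauchy circles must have positive radius while staying inside the disk of convergence). What the paper's computation buys in exchange is that the uniform convergence of every termwise-differentiated series --- hence the joint $C^\infty$ regularity of $\theta$ and the validity of differentiating under the sum in either order --- comes out of the same estimate, whereas you dispatch this in one closing sentence; to make that sentence airtight, note that $\partial_t^{p_2}\Theta$ is jointly continuous and holomorphic in $z$, so by the Cauchy integral formula its $z$-derivatives are jointly continuous and coincide with the sums of the termwise mixed derivatives, which identifies them with the classical partials $\partial_x^{p_1}\partial_t^{p_2}\theta$ on $[0,1]\times[0,T]$. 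This last point is routine and does not affect the correctness of your argument.
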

\begin{remark}
Proposition \ref{prop1} is sharp as far as the value of $R$ is concerned. It improves some result in \cite[p. 46-47]{laroche}, where the same conclusion was obtained under the assumption 
\be
\label{A5bis}
|y^{(i)} (t)| \le M \frac{i!^2}{\tilde R ^i} \qquad \forall i\ge 0, \ \forall t\in [0,T],
\ee
with $\tilde R>4$. Using Stirling formula, we see that $(2i)!/(i!)^2 \sim 2^{2i} /\sqrt{\pi i}$ so that \eqref{A5} is equivalent to 
\be
\label{A5ter}
|y^{(i)} (t)| \le \frac{M}{\sqrt{\pi i}}  \frac{i !^2}{ (\frac{R^2}{4})^i} \qquad \forall i\ge 0, \ \forall t\in [0,T].
\ee 
Our result is valid whenever $\tilde R:=R^2/4>1/4$. 
\end{remark}
\begin{proof}
We need to prove some uniform estimate for the series of the derivatives
\[
\partial _t^m\partial _x^n [\frac{x^{2i}}{(2i)!} y^{ (i) }(t) ] = \frac{x^{2i-n}}{(2i-n)!} y^{(i+m)} (t)
\]
for $2i-n\ge 0$, $x\in [0,1]$, and $t\in [0,T]$. Fix $m,n\in \N$, $i\in \N$ with $2i-n\ge 0$ and let $j:=2i-n\ge 0$, $N:=n+2m \ge n$
(hence $2i+2m=j+N$). We infer from \eqref{A5} that 
\[\left\vert \frac{y^{(i+m)} (t) }{(2i-n)!}\right\vert \le M \frac{(2i+2m)!}{(2i-n)! \,R^{2i+2m}} =M \frac{(j+N)!}{j! R^{j+N}} \cdot\]
Let
\[
S:=\sum_{i,\, 2i-n\ge 0} \big\vert \partial _t^m\partial _x^n [\frac{x^{2i}}{(2i)!} y^{(i)}  ]\big\vert . 
\]
Then 
\begin{eqnarray*}
S &\le& \sum_{i,\, 2i-n\ge 0} \vert \frac{ y^{(i+m)} (t) }{(2i-n)!}\vert \\
&\le& M\sum_{j\ge 0} \frac{ (j+1) (j+2)\cdots (j+N) }{ R^{j+N} } \\  
&=& M\sum_{ k\ge 0} \ \  \sum_{kN\le j< (k+1)N} \frac{(j+1)(j+2) \cdots (j+N)}{R^{j+N}} \\
&\le & M\sum_{k\ge 0} N\frac{ [ (k+2)N]^N}{ R^{ (k+1)N} } \\
&=& MN^{N+1} \sum_{k\ge 0} \left( \frac{k+2}{ R^{k+1} }\right) ^N.
\end{eqnarray*}
Pick any number $\sigma \in (0,1)$. Since $R>1$, $(k+2)/(R^{1-\sigma})^{k+1} \to 0$ as $k\to  + \infty$, and hence
\[
a:=\sup_{k\ge 0} \frac{k+2}{(R^{1-\sigma})^{k+1}} <\infty. 
\]
We infer that 
\[
\left( \frac{k+2}{R^{k+1}}\right) ^N \le \left( \frac{a}{R^{\sigma (k+1)} }\right) ^N,
\]
and hence 
\[
\sum_{k\ge 0} \left( \frac{k+2}{R^{k+1} }\right) ^N \le a^N \sum_{k\ge 0} R^{ - N\sigma (k+1) } = \frac{a^N}{R^{N\sigma} -1}\cdot
\]
It follows that 
\[
S\le M N^{N+1} \frac{a^N}{ R^{N\sigma } -1} \le M' \left( \frac{ae}{R^\sigma}  \right)^N N! \sqrt{N}
\]
for some constant $M'>0$, where we used Stirling formula $N!\sim (N/e)^N\sqrt{2\pi N}$ in the last inequality. 
Since $N=n+2m$, we have that 
\[
N! \le 2^{n+2m} n! (2m)! \le C \frac{2^{n+4m}}{\sqrt{m}} n! (m!)^2,
\]
where we used again Stirling formula. We conclude that 
\[
S\le CM' \left( \frac{ae}{R^\sigma} \right) ^{n+2m} 2^{n+4m} n! (m!)^2 \sqrt{\frac{n+2m}{m}} \le M'' \frac{(m!)^2}{R_1^m} \frac{n!}{R_2^n} 
\]
for some positive constants $M''$, $R_1$ and $R_2$. (We noticed that $\sqrt{n+2m}\le C \rho ^{n+m}$ for $\rho >1$ and some $C>0$.)
This proves that the series of derivatives $\partial _t^m\partial_x^n [x^{2i} y^{(i)}(t) /(2i)!]$ is uniformly convergent on $[0,1]\times [0,T]$ for all $m,n\ge 0$, so that
$\theta \in C^\infty ([0,1]\times [0,T])$ and it satisfies
\[
|\partial _t^m\partial _x^n \theta (x,t) | \le M'' \frac{(m!)^2}{R_1^m}\frac{n!}{R_2^n} \qquad \forall m,n\in \N, \ \forall (x,t) \in [0,1]\times [0,T],
\]
as desired. The proof of Proposition \ref{prop1} is complete. 
\end{proof}
\begin{theorem}
\label{thm1}
Pick any $\theta _T\in G^1([0,1])$ written as
\be
\theta _T (x) =\sum_{i\ge 0} c_{2i} \frac{ x^{2i} }{ (2i)! }
\label{A11} 
\ee
with
\be
|c_{2i}|\le M\frac{(2i)!}{R^{2i}}
\label{A12} 
\ee
for some $M>0$ and $R>R_0:=e^{(2e)^{-1} } > 1.2$. Then for any $T>0$ and any $R' \in (R_0,R)$, one can pick a function $y\in G^2 ([0,T])$ such that 
\ba 
y^{(i)}(0)=0 \quad &&\forall i\ge 0, \label{A13a}\\
y^{(i)} (T) =c_{2i} \quad &&\forall i\ge 0, \label{A13b}\\
|y^{ (i) } (t) | \le M'  (\frac{R'}{R} )^{2i}  (2i)! \quad && \forall t\in [0,T], \ \forall i\ge 0 \label{A14}
\ea
for some constant $M'>0$. 
Thus, the control input $h(t):=\sum_{i\ge 0} \frac{y^{(i)}(t)}{(2i-1)!}$ is Gevrey of order 2 on $[0,T]$ by Proposition \ref{prop1}, and it steers 
the solution $\theta$ of \eqref{A1}-\eqref{A3} from $0$ at $t=0$ to $\theta _T$ at $t=T$.   
\end{theorem}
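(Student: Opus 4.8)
The plan is to split the statement into two essentially independent pieces: a Borel-type interpolation step producing a function $y$ satisfying \eqref{A13a}--\eqref{A14}, and a verification step showing that the $\theta$ defined by \eqref{A4} with this $y$ solves \eqref{A1}--\eqref{A3} and hits $\theta_T$ at time $T$. All the real difficulty sits in the interpolation with a controlled \emph{loss}; once $y$ is in hand, the PDE part is pure bookkeeping resting on Proposition \ref{prop1}.

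First I would solve the interpolation problem: given $(c_{2i})$ with $|c_{2i}|\le M(2i)!/R^{2i}$, find $y\in C^\infty([0,T])$ with $y^{(i)}(0)=0$, $y^{(i)}(T)=c_{2i}$, and the loss estimate \eqref{A14}. Following the real-variable (Petzsche) route, I would build $y$ near $t=T$ as a superposition of scaled bumps,
\be
y(t)=\sum_{i\ge 0} c_{2i}\,\frac{(t-T)^i}{i!}\,\chi(\lambda_i(t-T)),
\ee
where $\chi\in C^\infty$ equals $1$ near $0$ and is supported in a fixed small interval, and the scalings $\lambda_i\to\infty$ localize each bump on an interval shrinking with $i$. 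Since $\chi-1$ is flat at $0$, the $i$-th term equals $(t-T)^i/i!$ in a neighborhood of $T$ and hence contributes exactly $c_{2i}$ to $y^{(i)}(T)$ and nothing to the lower derivatives there, giving \eqref{A13b}; the uniform convergence guaranteed by \eqref{A14} justifies differentiating term by term. I would then multiply by a single fixed cutoff vanishing on a neighborhood of $t=0$ and equal to $1$ near $t=T$ to enforce \eqref{A13a}; this preserves the Gevrey order and does not touch the estimate near $T$.

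The hard part is \eqref{A14} with the sharp loss $\rho=R_0=e^{(2e)^{-1}}$. Differentiating the series term by term, each $y^{(i)}$ becomes a sum in which the cutoff derivatives $\chi^{(k)}(\lambda_j(t-T))$ produce factors $\lambda_j^{\,k}$; balancing these against the factorial growth of $(c_{2i})$ forces a careful choice of the $\lambda_i$ and, above all, of $\chi$ itself. I would take $\chi$ to be a normalized repeated convolution of characteristic functions of intervals, whose successive derivatives obey explicit bounds, and then optimize the interval lengths so as to minimize the resulting loss constant. This optimization is exactly what pins the loss at $R_0$; because the hypothesis is $R>R_0$, any $R'\in(R_0,R)$ leaves a margin and \eqref{A14} holds with that $R'$. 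I expect this constant-chasing to be the main obstacle of the whole argument.

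With $y$ so constructed, \eqref{A14} reads $|y^{(i)}(t)|\le M'(2i)!/(R/R')^{2i}$ with $R/R'>1$, so Proposition \ref{prop1} applies and yields $\theta\in G^{1,2}([0,1]\times[0,T])$; in particular the defining series may be differentiated term by term. A direct computation then gives $\theta_t=\theta_{xx}$ (the two series coincide after the shift $j=i-1$), $\theta_x(0,t)=0$ (only odd powers of $x$ survive in $\theta_x$, all vanishing at $0$), and $\theta_x(1,t)=\sum_{i\ge 1}y^{(i)}(t)/(2i-1)!=h(t)$, which is Gevrey of order $2$ again by Proposition \ref{prop1}. Finally \eqref{A13a} gives $\theta(\cdot,0)\equiv 0$ while \eqref{A13b} and \eqref{A11} give $\theta(\cdot,T)=\theta_T$. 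By uniqueness of the solution of \eqref{A1}--\eqref{A3} with $\theta_0\equiv 0$ and this control $h$, the constructed $\theta$ is that solution, so $h$ steers $0$ to $\theta_T$ in time $T$, as claimed.
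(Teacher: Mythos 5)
Your overall architecture is the paper's: reduce everything to a Borel-type interpolation problem with controlled loss, then let Proposition \ref{prop1} and term-by-term differentiation handle the PDE bookkeeping (that last part of your argument is fine and matches the paper). The gap is in the interpolation step, and it is fatal as written: an ansatz built from a \emph{single} cutoff $\chi$ rescaled by factors $\lambda_i$,
\[
y(t)=\sum_{i\ge 0} c_{2i}\,\frac{(t-T)^i}{i!}\,\chi\bigl(\lambda_i(t-T)\bigr),
\]
cannot yield \eqref{A14}, nor in fact Gevrey-2 bounds with \emph{any} finite loss. Consider the term $i=j$ and the Leibniz contribution in which all $j$ derivatives fall on the cutoff: it equals $c_{2j}\,\frac{(t-T)^j}{j!}\,\lambda_j^j\,\chi^{(j)}(\lambda_j(t-T))$. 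On the support of $\chi(\lambda_j\cdot)$ one has $|t-T|\le a/\lambda_j$, so the two powers of $\lambda_j$ cancel exactly and this contribution is of size $|c_{2j}|\,a^j\sup|\chi^{(j)}|/j!$, independently of the choice of $\lambda_j$. Since \eqref{A12} allows $|c_{2j}|$ as large as $M(2j)!/R^{2j}$, the bound \eqref{A14} would force $\sup|\chi^{(j)}|\le C\,j!\,\rho^j$ for some $\rho>0$, i.e.\ Gevrey-1 bounds on $\chi$; but a compactly supported function satisfying Gevrey-1 bounds on $\R$ is real analytic, hence identically zero --- contradicting $\chi\equiv 1$ near $0$. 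So for \emph{any} admissible fixed cutoff (including one obtained by repeated convolutions of characteristic functions) the diagonal terms already destroy the $(2j)!$-type estimate, and the ``constant chasing'' you defer to cannot close: rescaling a fixed bump multiplies its $k$-th derivative bound by $\lambda^k$ but can never repair its factorial structure.

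The idea missing from your proposal --- and the heart of the paper's proof, following Petzsche --- is to use a \emph{different} cutoff $\varphi_p$ for each index $p$: Corollary \ref{cor2} is applied to the modified sequence $\hat a_k$ which is frozen at the constant value $\tilde a_p$ for $1\le k\le p$ and only decays for $k>p$. The resulting $\varphi_p$ has small support $\sim Ap/(hm_p)$ (the analogue of your $1/\lambda_p$) but satisfies $|\varphi_p^{(k)}|\le C(\delta h m_p)^k$ for $k\le p$: \emph{geometric} in $k$ with ratio proportional to $m_p$, rather than factorial. This is precisely what neutralizes the diagonal terms, and it is unattainable by scaling one function. Note also that the sharp loss $R_0=e^{(2e)^{-1}}$ does not come from optimizing the convolution interval lengths (that optimization, Lemma \ref{lem2}, only replaces H\"ormander's $2^k$ by $\delta^k$); it comes from the elementary bound $\max_{0\le x\le p}\left(\frac{p}{p-x}\right)^{p-x}=e^{p/e}$ controlling the monomial factors in $\zeta_p^{(j)}$, which produces a loss $e^{1/e}$ per index $p$, i.e.\ $e^{(2e)^{-1}}$ on the scale of $R$ because $M_p=(2p)!$ corresponds to $2p$ derivatives. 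Finally, a smaller point: your last step multiplies by a cutoff vanishing near $t=0$ and asserts this ``preserves the Gevrey order''; that is not enough, since you must preserve the precise constant $(R'/R)^{2i}$ in \eqref{A14}. The paper handles this through Lemma \ref{lem3}, taking the cutoff in $G^\sigma$ with $\sigma<2$ strictly, so that the product keeps the same radius as $f$.
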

\begin{proof}
Using Proposition \ref{prop1}, it is clearly sufficient to prove the existence of a function $y\in C^\infty ([0,T])$ satisfying \eqref{A13a}-\eqref{A14}. 
To do it, we shall need several lemmas. The first one comes from \cite[Theorem 1.3.5]{hormander1}.
\begin{lemma}
\label{lem1}
Let $a_0\ge a_1\ge a_2\ge \cdots >0$ be a sequence such that $a:=\sum_{j=0}^\infty  a_j <\infty$. Then there exists $u\in C_0^\infty (\R )$ such that 
\begin{eqnarray*}
&&u\ge 0, \ \int_{\R} u(x)dx=1,\ \textrm{Supp } u\subset [0,a], \\
\textrm{ and } && |u^{ (k) } (x) |\le 2^k (a_0 a_1 \cdots a_k)^{-1}, \qquad \forall k\ge 0,\ \forall x\in \R.  
\end{eqnarray*} 
\end{lemma}
The following lemma improves slightly Lemma \ref{lem1} as far as the estimates of the derivatives are concerned. 
\begin{lemma}
\label{lem2}
Let $a_0\ge a_1\ge a_2\ge \cdots >0$ be a sequence such that $a:=\sum_{j=0}^\infty  a_j <\infty$. Then for any $\delta >0$,  
there exist $v\in C_0^\infty (\R )$ and $M>0$ such that 
\begin{eqnarray*}
&&v\ge 0, \ \int_{\R} v(x)dx=1,\ \textrm{Supp } v\subset [0,a], \\
\textrm{ and } && |v^{ (k) } (x) |\le  M \delta ^k (a_0 a_1 \cdots a_k)^{-1}, \qquad \forall k\ge 0,\ \forall x\in \R.  
\end{eqnarray*} 
\end{lemma}
\noindent{\em Proof of Lemma \ref{lem2}:} For any given $k_0\in \N$, let 
\[
\kappa := a \big( (k_0+1)a_{k_0} + \sum_{k>k_0} a_k\big) ^{-1},
\]
and 
\[
\tilde a_k := \left\{ 
\begin{array}{ll}
\kappa a _{k_0} &\textrm{ if } 0\le k\le k_0, \\
\kappa a_k &\textrm{ if } k>k_0.
\end{array}
\right. 
\]
Since the sequence $(a_k)_k$ is nonincreasing and the series $\sum_{k\ge 0} a_k$ is convergent, it follows from Pringsheim's theorem (see \cite{hardy}) that 
\[
ka_k\to 0 \qquad \textrm{ as } k\to +\infty . 
\]
Therefore, we may pick $k_0\in \N$ so that 
\[
\kappa > \frac{2}{\delta}\cdot
\] 
Note that $\sum_{k\ge 0} \tilde a_k = a$. Pick a function $v\in C_0^\infty(\R )$ as in Lemma \ref{lem1} and associated with the sequence $(\tilde a_k)_{k\ge 0}$; that is, 
\begin{eqnarray*}
&&v\ge 0, \ \int_{\R} v(x)dx=1,\ \textrm{Supp } v\subset [0,a], \\
\textrm{ and } && |v^{ (k) } (x) |\le 2^k (\tilde a_0 \tilde a_1 \cdots \tilde a_k)^{-1}, \qquad \forall k\ge 0,\ \forall x\in \R.  
\end{eqnarray*} 
Then, for any $k\ge k_0$,
\begin{eqnarray*}
|v^{(k)} (x) | &\le& (\kappa a_{k_0} )^{-(k_0+1)} 2^k \kappa ^{-(k-k_0)} (a_{k_0+1} \cdots a_k)^{-1} \\
&\le & \kappa ^{-1} (\frac{2}{\kappa}) ^k \frac{a_0a_1\cdots a_{k_0}}{a_{k_0}^{k_0+1}} \big( \prod_{i=0}^k a_i \big) ^{-1}. 
\end{eqnarray*} 
Thus
\[
|v^{(k)} (x) | \le M_1 \delta ^k (a_0\cdots a_k)^{-1}, 
\]
where 
\[M_1 := \kappa ^{-1} \frac{a_0a_1\cdots a_{k_0-1}}{a_{k_0}^{k_0}}\cdot \]
Finally, for $0\le k\le k_0$ and $x\in \R$ 
\[
|v^{(k)} (x)| \le 2^k (\kappa a_{k_0} )^{-(k+1)}  \le M_2 \delta ^k (a_0\cdots a_k)^{-1}
\]
with 
\[
M_2:= \sup_{0\le k\le k_0} (\frac{2}{\delta})^k \frac{a_0a_1\cdots a_k}{(\kappa a_{k_0})^{k+1}}\cdot
\]
We conclude that 
\[
|v^{(k)} (x)| \le M \delta ^k (a_0\cdots a_k)^{-1}, \qquad \forall k\ge 0,\ \forall x\in \R 
\]
with $M:=\sup (M_1,M_2)$.\qed

\begin{corollary}
\label{cor2}
For any sequence $(a_k)_{k\ge 1}$ satisfying $a_1\ge a_2\ge \cdots >0$ and  $\sum_{k\ge 1} a_k<\infty$ and for any $\delta >0$, there exists 
a function $\varphi\in C^\infty _0(\R )$ with $\textrm{Supp } \varphi \subset [-a,a]$, $0\le \varphi \le 1$, $\varphi ^{(p)} (0)=\delta _ p^0$ and 
\[
| \varphi ^{( k) }  (x)| \le C \delta ^k (a_1 \cdots a_k)^{-1}\quad \forall k\ge 0, \ \forall x\in \R ,
\]
with the convention that $a_1\cdots a_k=1$ if $k=0$.  
\end{corollary}
Indeed, there exist by Lemma \ref{lem2} a function $v\in C_0^\infty (\R )$ and a number $C>0$ such that 
\begin{eqnarray*}
&&v\ge 0, \ \int_{\R} v(x)dx=1,\ \textrm{Supp } v\subset [0,a], \\
\textrm{ and } && |v^{ (k) } (x) |\le C\delta ^k (a_1  \cdots a_{k+1})^{-1}, \qquad \forall k\ge 0,\ \forall x\in \R.  
\end{eqnarray*} 
Note that $v^{(k)} (a) =0$ $ \forall k\ge 0$. Let 
\[
\varphi (x) := \int_{-\infty} ^{a-|x|} v(s) ds.
\]
Clearly, $\varphi \in C_0^\infty (\R )$, $0\le \varphi \le 1$, $\textrm{ Supp } \varphi \subset [-a,a]$, $\varphi ^{(j)} (0)=\delta _j^0$, and 
\[
|\varphi ^{(k)} (x) | \le C \frac{\delta ^{k-1}}{ a_1  \cdots a_{k}}
\le C' \frac{\delta ^{k}}{ a_1  \cdots a_{k}}\qquad \forall k\ge 1, \ \forall x\in \R .
\]

\begin{proposition} \label{prop2}
Pick any sequence $(a_k)_{k\ge 0}$ satisfying $1=a_0\ge a_1\ge a_2\ge \cdots >0$ and
\begin{eqnarray*}
&&a:=\sum_{k\ge 1} a_k <\infty, \\
&& pa_p + \sum_{k>p} a_k \le  A pa_p\qquad \forall p\ge 1, 
\end{eqnarray*}  
for some constant $A\in (0,+\infty )$. Let $M_q:=(a_0\cdots a_q)^{-1}$ for $q\ge 0$. 
Then for any sequence of real numbers $(d_q)_{q \ge 0}$ such that 
\be
\label{B1}
|d_q| \le C H^q M_q \quad  \forall q\ge 0, 
\ee
for some $H>0$ and $C>0$, and for any $\tilde H>e^{e^{-1}}H$, 
there exists a function $f\in C^\infty (\R )$ such that 
\ba
f^{(q)}(0)&=&d_q,\quad  \forall q\ge 0,  \label{B2}\\
|f^{(q)} (x) | &\le& C\tilde H^q M_q \quad \forall q\ge 0, \ \forall x\in\R . \label{B3}
\ea 
\end{proposition}
\begin{proof}
We follow closely \cite{petzsche}. 
Let $m_q := 1/a_q$  for $q\ge 0$, so that $M_q=m_0\cdots m_q$. 
For any given $h>0$, we set $\tilde a_k := h^{-1} a_k$ for all $k\ge 0$,  so that 
\begin{eqnarray*}
&& h^{-1} a=\sum_{k\ge 1} \tilde a_k , \\
&& p \tilde a_p + \sum_{k>p} \tilde a_k \le  A p \tilde a_p = Ap/(hm_p)\qquad \forall p\ge 1. 
\end{eqnarray*} 
By Corollary \ref{cor2} applied to the sequence $(\tilde a_q)_{q\ge 1}$, there is a function $\varphi\in C^\infty_0(\R )$ such that 
$\textrm{Supp } \varphi \subset [-h^{-1} a, h^{-1}a]$, $0\le \varphi \le 1$,  $\varphi ^{(j)} (0)=\delta _ j^0$ and 
\[
| \varphi  ^{( j) }  (x) | \le C (\delta h) ^j M_j\qquad \forall j\ge 0.  
\]
Set $\zeta _0(x)=\varphi_0(x):=\varphi(x)$. 
Applying for any  $p\in \N ^*$ Corollary \ref{cor2} to the sequence 
\[
\hat a_k := \left\{ 
\begin{array}{ll}
\tilde a_p &\textrm{ if } 1\le k\le p,\\
\tilde a_k &\textrm{ if } k\ge p+1,
\end{array}
\right.
\]
we may also pick a function $\varphi _p\in C^\infty(\R )$ with 
$\varphi _p \subset [-Ap/(hm_p),Ap/(hm_p)]$, $0\le \varphi _p \le 1$, $\varphi _p^{(j)} (0)=\delta _ j^0$ and 
\[
| \varphi _p^{( j) }  (x)| \le  
\left\{ 
\begin{array}{ll}
C(\delta  hm_p)^j &\textrm{ if }  0\le j\le p,\\
C (\delta h)^j m_p^p \frac{M_j}{M_p}& \textrm{ if } j>p.   
\end{array}
\right.
\]
We set $\zeta _p(x):=\varphi _p(x) \frac{x^p}{ p! }$, so that  $\zeta _p^{ (j) }(0)=\delta _j^p$. To estimate $\zeta _p^{(j)}$ for $p\ge 1$ and $j\ge 0$, we distinguish two cases.\\
(i) For $0\le j\le p$, we have
\begin{eqnarray*}
|\zeta _p^{(j)} (x) | 
&\le& \sum_{i=0}^j \left(\begin{array}{c} j\\i \end{array}\right) |\varphi _p^{(i)} (x) \frac{x^{p-j+i}}{(p-j+i)!}| \\
&\le& C\sum_{i=0}^j \left(\begin{array}{c} j\\i \end{array}\right) (\delta  h m_p)^i \left( \frac{A}{ h m_p} \right)^{p-j +i}  \frac{p^{p-j+i}}{(p-j+i)!} \\
&\le& C \frac{M_j}{M_p} (\frac{A}{h})^p \sum_{i=0}^j \left(\begin{array}{c} j\\i \end{array}\right) (\delta h) ^i  \left( \frac{h}{A} \right)^{j-i}  \frac{m_{j+1}\cdots m_p}{m_p^{p-j}} 
  \frac{p^{p-j+i}}{(p-j+i)!}  \\
&\le&  C \frac{M_j}{M_p} (\frac{Ae}{h})^p \sum_{i=0}^j \left(\begin{array}{c} j\\i \end{array}\right) 
(\delta h) ^i  \left( \frac{h}{Ae} \right)^{j-i}    \left( \frac{ p}{ p-j+i } \right) ^{p-j+i}
\end{eqnarray*} 
where we used Stirling's formula and the fact that the sequence $(m_i)_{i\ge 0}$ is nondecreasing in the last inequality. Elementary
computations  show that the function 
$x \in [0,p] \to \left( \frac{p}{p-x} \right) ^{p-x} \in \R$ reaches its greatest value for $x=p(1-e ^{-1})$, and hence 
\[
\left( \frac{ p}{ p-j+i } \right) ^{p-j+i} \le e^{ \frac{p}{e} }.
\] 
We conclude that 
\be
|\zeta _p^{(j)} (x) | \le  C \frac{M_j}{M_p} (\frac{Ae^{1+e^{-1}}}{h})^p  \left( \delta h + \frac{h}{Ae} \right)^j. 
\label{A100}
\ee
(ii) For $j > p$, we have
\begin{eqnarray*}
|\zeta _p^{(j)} (x) | 
&\le& \sum_{i=j-p}^j \left(\begin{array}{c} j\\i \end{array}\right) |\varphi _p^{(i)} (x) \frac{x^{p-j+i}}{(p-j+i)!}| \\
&\le& C\sum_{ \tiny \begin{array}{c} j-p\le i\le j\\ \textrm{and } i\le p \end{array}  } \left(\begin{array}{c} j\\i \end{array}\right) (\delta  h m_p)^i \left( \frac{A}{ h m_p} \right)^{p-j +i}  \frac{p^{p-j+i}}{(p-j+i)!} \\
&&+\,  C\sum_{\tiny \begin{array}{c} j-p\le i\le j\\ \textrm{and } i\ge  p+1 \end{array} }\left(\begin{array}{c} j\\i \end{array}\right) (\delta  h)^i  m_p^p \frac{M_i}{M_p}\left( \frac{A}{ h m_p} \right)^{p-j +i}  \frac{p^{p-j+i}}{(p-j+i)!} =:CS_1+CS_2. \\
%&&C \frac{M_j}{M_p} (\frac{A}{h})^p \sum_{i=0}^j \left(\begin{array}{c} j\\i \end{array}\right) (\delta h) ^i  \left( \frac{h}{A} \right)^{j-i}  \frac{m_{j+1}\cdots m_p}{m_p^{p-j}} 
%  \frac{p^{p-j+i}}{(p-j+i)!}  \\
%&\le&  C \frac{M_j}{M_p} (\frac{Ae}{h})^p \sum_{i=0}^j \left(\begin{array}{c} j\\i \end{array}\right) 
%(\delta h) ^i  \left( \frac{h}{Ae} \right)^{j-i}    \left( \frac{ p}{ p-j+i } \right) ^{p-j+i}
\end{eqnarray*} 
We infer from the computations in the case $0\le j\le p$ that 
\[
S_1 \le C \frac{M_j}{M_p} (\frac{Ae^{1+e^{-1}}}{h})^p  \left( \delta h + \frac{h}{Ae} \right)^j. 
\]
For $S_2$, we notice that 
\[
\frac{M_i}{M_p} m_p^{j-i}  
= \frac{M_j}{M_p} \frac{m_p^{j-i} m_{p+1} \cdots m_i}{ m_{p+1}\cdots m_j}   \le  \frac{M_j}{M_p}
\]
where we used again the fact that the sequence $(m_j)_{j\ge 0}$ is nondecreasing.
Thus 
\begin{eqnarray*}
S_2 &\le & 
C \frac{M_j}{M_p} (\frac{Ae}{h})^p \sum_{ \tiny \begin{array}{c} j-p\le i\le j\\ \textrm{and } i\ge  p+1 \end{array}    } \left(\begin{array}{c} j\\i \end{array}\right) 
(\delta h) ^i  \left( \frac{h}{Ae} \right)^{j-i}    \left( \frac{ p}{ p-j+i } \right) ^{p-j+i} \\
&\le& 
C \frac{M_j}{M_p} (\frac{Ae^{1+e^{-1}} }{h})^p 
\left(  \delta h  + \frac{h}{Ae} \right)^j.  
\end{eqnarray*}
We conclude that \eqref{A100} is valid also for $j>p$. Clearly, \eqref{A100} is also true for $p=0$ and $j\ge 0$.  

Let the sequence $(d_q)_{q\ge 0}$ and the number $H$ be as in \eqref{B1}. Pick $\delta >0$ and $h>0$ such that 
\[
h= (1+\delta ) Ae^{1+e^{-1}} H,
\]
and 
\[
\delta h + \frac{h}{Ae}= (1+\delta) (\delta A e + 1) e^{e^{-1}}H <\tilde H. 
\] 
Let $f(x) := \sum_{p\ge 0} d_p\zeta _p(x)$. Then $f\in C^\infty( \R )$, and for all $x\in \R$ and all $j \ge 0$  
\begin{eqnarray*}
| f^{ (j) } (x)| &\le & \sum_{p\ge 0} |d_p \zeta _p^{(j)} (x) | \\
&\le & C \sum_{p\ge 0}  (H^p M_p) \frac{M_j}{M_p}  (\frac{Ae^{1+e^{-1}} }{h})^p  \left(  \delta h  + \frac{h}{Ae} \right)^j \\
&\le& C\sum_{p\ge 1} (1+\delta )^{-p} \tilde H ^jM_j \\
&\le& C\tilde H ^jM_j.
\end{eqnarray*}
Thus \eqref{B3} holds, and \eqref{B2} is obvious. 

To ensure that the support of $f$ can be chosen as small as desired, we need the following 
\begin{lemma}
\label{lem3}
Let $-\infty < T_1<T_2<\infty$, $1<\sigma <s$,  and let $f\in G^s([T_1,T_2])$ and  $g\in G^\sigma ([T_1,T_2])$; that is
\begin{eqnarray}
| f^{ (n) } (t) | &\le& C \frac{n!^s }{R^n} \qquad \forall t\in [T_1,T_2], \ \forall n\ge 0, \label{E1} \\
| g^{ (n) } (t) | &\le& C' \frac{n!^\sigma  }{\rho ^n} \qquad \forall t\in [T_1,T_2], \ \forall n\ge 0, \label{E2}
\end{eqnarray} 
where $C,C',R$ and $ \rho$ are some positive constants. Then $fg\in G^s ([T_1,T_2])$ with the same $R$ as for $f$; that is, we have for some
constant $C''>0$
\begin{equation}
|(fg)^{ (n) } (t) | \le C'' \frac{n! ^s}{R ^n} \qquad \forall t\in [T_1,T_2], \ \forall n\ge 0. 
\end{equation}
\end{lemma}
\begin{proof}
From Leibniz' rule and \eqref{E1}-\eqref{E2}, we have that
\begin{equation}
|(fg)^{(n)} (t) |  = \left\vert \sum_{j=0} ^ n 
\left(
\begin{array}{c} 
n\\
j
\end{array} 
\right) 
f^{ (j) } (t) g^{ (n-j) } (t) 
\right\vert 
\le CC' \sum_{j=0}^n 
\left(
\begin{array}{c} 
n\\
j
\end{array} 
\right) 
\frac{j! ^s (n-j) !^\sigma}{R^j \rho ^{n-j}}.
\label{E3}
\end{equation}

We claim that 
\begin{equation}
\left(
\begin{array}{c} 
n\\
j
\end{array} 
\right) 
\frac{j! ^s (n-j) !^\sigma}{R^j \rho ^{n-j}}
\le 
\tilde C \frac{ n! ^s }{ 2^{ n- j } R^n}\cdot \label{E4}
\end{equation}

Indeed, \eqref{E4} is equivalent to 
\begin{equation}
j! ^{s-1} (n-j) ! ^{\sigma -1} 
\le \tilde C ( \frac{\rho }{2R} )^{ n- j } n! ^{s-1} \label{E5}
\end{equation}
and to prove \eqref{E5}, we note that, since $ 1< \sigma  < s$, 
\[
j! ^{s-1} (n-j) ! ^{\sigma -1}  = (n-j) ! ^{\sigma -s } (j! (n-j) ! )^{s-1} 
\le \tilde C ( \frac{\rho }{2R} )^{ n- j } n! ^{s-1} 
\]
for some constant $\tilde C>0$ and all $0\le j\le n$.  It follows from \eqref{E3}-\eqref{E4} that 
\[
|(fg) ^{(n)} (t) | \le 2CC'\tilde C \frac{n!^s}{R^n}\qquad \forall t\in [T_1,T_2], \ \forall n\ge 0. 
\]
\end{proof}

Multiplying $f$ by a cutoff function $g\in C^\infty_0(\R ) $ with $g(x)=1$ for $|x|<a/2$, $\textrm{Supp } (g)\subset [-a,a]$, and 
$g\in G^\sigma ([-a,a])$ for some  $1<\sigma <2$,  
  we can  still assume that $\textrm{Supp } f \subset [-a,a]$.  The proof of Proposition \ref{prop2} is complete. \end{proof}
We apply Proposition \ref{prop2} with $d_q=c_{2q}$ for all $q\ge 0$, $a_0=1$, $a_p=[2p(2p-1)]^{-1}$ for $p\ge 1$, so that $M_p=(2p)!$, $H=R^{-2}$, 
$  \tilde H \in (e^{e^{-1}}H, (R'/R)^2 )$. Let $f$ be as in \eqref{B2}-\eqref{B3}, and pick $g\in G^\sigma ([0,T])$ with $1<\sigma <2$, $g^{(i)}(0)=0$ for all $i\ge 0$, 
$g(T)=1$, and $g^{(i)}(T)=0$ for all $i\ge 1$. Set finally 
\[
y(t)=f(t -T)g(t), \quad t\in [0,T]. 
\]
Since by \eqref{B3}
\[
| f^{(i)} (t) | \le C\tilde H ^i (2i) ! \le C \frac{ (4\tilde H )^i i! ^2 }{\sqrt{i+1} }\quad \forall t\in \R,
\]
we infer from Lemma \ref{lem3} that 
\[
| y^{ (i) } (t) | \le C (4\tilde H )^i i! ^2 \le C\sqrt{i+1} \tilde H ^i (2i) ! \le C ( \frac{R'}{R} )^{2i} (2i) !\qquad \forall i\in [0,T], 
\]
i.e. \eqref{A14} holds. The properties \eqref{A13a} and \eqref{A13b} are clearly satisfied. The proof of Theorem \ref{thm1} is complete.  
\end{proof}
%\begin{remark}
%In the case when $(a_p)$ is given by \eqref{B10}, we can improve slightly the value of $R_0$ by evaluating in a more careful  way the quantity
%\[
%\frac{ m_{j+1} \cdots m_p }{m_p^j} ( \frac{p}{p-j+i} ) ^{p-j+i} \cdot 
%\]
%We find $R_0=e^{\mu /2} < 1.16$ where $\mu := \frac{e}{2} \left( \sqrt{1+ \frac{4}{e} } -1  \right)$.  
%\end{remark} 
The following result, proved by using the {\em complex variable approach}, gives a Borel interpolation result without loss, but for a restricted class of sequences 
$(d_n)_{n\ge 0}$. 
\begin{theorem}
\label{thm2}
Let $(d_n)_{n\ge 0} $ be a sequence of complex numbers such that 
\be
\label{Y1}
|d_n| \le M \frac{(2n)!}{R^{2n}}\quad \forall n\in \N
\ee
for some constants $M>0$ and $R>1$, and such that the function 
\be
\label{Y2}
g(z):= \sum_{n\ge 0} \frac{d_{n+1}}{n! (n+1)!} z^n, \quad |z|<R^2/4,
\ee
can be extended as an analytic function in an open  neighborhood of $\R _-$ in $\C$  with 
\be
\label{Y3}
|g^{(n)} (z) |\le C|g^{ (n) } (0)| \quad \forall z\in \R_-,\ \forall n\in \N 
\ee
for some constant $C>0$.  
%is a rational function whose poles are in the domain
%\be
%D:=\{z\in \C ; \  |z| > \frac{1}{4} \} \cap \{z \in \C ;\  \textrm{Re } z  > 0\}. \label{F2}
%\ee
Let $-\infty < T_1< T_2<\infty$. 
Then there exists a function $f\in C^\infty ( [ T_1, T_2],\C )$ such that 
\ba
&&f^{(n)} ( T_1)=0 \qquad \forall n\ge 0, \label{Y3a}\\
&&f^{(n)} ( T_2)=d_n \qquad \forall n\ge 0, \label{Y3b}\\
&&| f^{ (n) } (t)| \le M' \frac{ (2n)! }{ R^{2n} } \qquad  \forall n\ge 0, \ \forall t\in [ T_1 , T_2] \label{Y4}
\ea
for some $M'>0$ and the same constant $R>1$ as in \eqref{Y1}. 
\end{theorem}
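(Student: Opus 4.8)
The plan is to recognize the hypotheses \eqref{Y2}--\eqref{Y3} as a Borel--Laplace summability condition and to produce $f$ by a Laplace integral along $\R_-$. I would introduce the Borel transform
\[
B(\xi):=\sum_{n\ge 0}\frac{d_n}{(n!)^2}\,\xi^n,
\]
which by \eqref{Y1} converges for $|\xi|<R^2/4$ and satisfies $B'=g$, so that $B^{(n)}=g^{(n-1)}$ for $n\ge 1$ and $B^{(n)}(0)=d_n/n!$ for all $n\ge 0$. By \eqref{Y2}--\eqref{Y3}, $B$ extends holomorphically to a neighborhood of $\R_-$, and for $z\in\R_-$ and $n\ge 1$ one has $|B^{(n)}(z)|=|g^{(n-1)}(z)|\le C|g^{(n-1)}(0)|=C|B^{(n)}(0)|=C|d_n|/n!$, together with the linear bound $|B(z)|\le |d_0|+C|d_1|\,|z|$. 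I would then set, for $t\le T_2$,
\[
f(t):=\int_0^\infty e^{-\eta}\,B\big((t-T_2)\eta\big)\,d\eta,
\]
where $(t-T_2)\eta\in\R_-$ so that the analytic continuation of $B$ is the one being used.

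First I would check that $f$ is well defined and $C^\infty$ on $(-\infty,T_2]$ with $f^{(n)}(t)=\int_0^\infty e^{-\eta}\eta^n B^{(n)}\big((t-T_2)\eta\big)\,d\eta$: the bounds on $B^{(n)}$ along $\R_-$ dominate the integrands by $C|d_n|\,e^{-\eta}\eta^n/n!$, which legitimizes differentiation under the integral sign and continuity up to $t=T_2$. Evaluating at $t=T_2$ gives $f^{(n)}(T_2)=B^{(n)}(0)\int_0^\infty e^{-\eta}\eta^n\,d\eta=B^{(n)}(0)\,n!=d_n$, which is \eqref{Y3b}. The crucial no-loss estimate is then immediate: for $t\le T_2$,
\[
|f^{(n)}(t)|\le \int_0^\infty e^{-\eta}\eta^n\,\big|B^{(n)}\big((t-T_2)\eta\big)\big|\,d\eta\le \frac{C|d_n|}{n!}\int_0^\infty e^{-\eta}\eta^n\,d\eta = C|d_n|\le CM\frac{(2n)!}{R^{2n}},
\]
so \eqref{Y4} holds on all of $[T_1,T_2]$ with the \emph{same} $R$. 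It is exactly hypothesis \eqref{Y3} that converts pointwise control of $B^{(n)}$ at the origin into uniform control along $\R_-$, and thereby delivers the estimate without any enlargement of the Gevrey radius.

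It remains to impose \eqref{Y3a}. Since $f$ is the restriction of an analytic function near $T_1$, it cannot be flat there unless it vanishes identically, so flatness must be produced by a non-analytic cutoff. I would fix $\sigma\in(1,2)$ and a function $\chi\in C^\infty(\R)$ with $\chi\equiv 1$ on a neighborhood of $T_2$, $\chi\equiv 0$ on a neighborhood of $T_1$, and $\chi\in G^\sigma([T_1,T_2])$, and replace $f$ by $\chi f$. Then $(\chi f)^{(n)}(T_2)=f^{(n)}(T_2)=d_n$ and $(\chi f)^{(n)}(T_1)=0$, giving \eqref{Y3a}--\eqref{Y3b}. To preserve \eqref{Y4} with the same $R$, I would estimate the product directly in the $(2n)!/R^{2n}$ normalization via Leibniz' rule: writing $|\chi^{(j)}|\le C_1 (j!)^\sigma/\rho^j$ and using $(n-j)!/n!\le 1/j!$ together with the monotonicity $\binom{2(n-j)}{n-j}\le\binom{2n}{n}$, each term is bounded by $(j!)^{\sigma-2}(R^2/\rho)^j\,(2n)!/R^{2n}$, and since $\sigma<2$ the factorial decay $(j!)^{\sigma-2}$ makes $\sum_{j\ge 0}(j!)^{\sigma-2}(R^2/\rho)^j$ a finite constant independent of $n$. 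This is the mechanism already used in Lemma \ref{lem3}, and it keeps $R$ unchanged; the delicate point is that one must \emph{not} pass through the $n!^2$ normalization, which would cost a factor $\sqrt n$ and hence a strictly smaller radius.

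The main obstacle is conceptual rather than computational: identifying $g=B'$ and realizing that the somewhat opaque condition \eqref{Y3} is precisely the uniform bound on $\R_-$ required to run Borel--Laplace summation without loss. Once this is recognized, the analytic verifications (convergence of the Laplace integral, differentiation under the integral sign, the limit at $T_2$) are routine, and the flatness at $T_1$ is dealt with by the cutoff, the only subtlety being to carry out the product estimate in the $(2n)!/R^{2n}$ normalization so that the Gevrey radius $R$ is preserved exactly.
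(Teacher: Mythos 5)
Your proposal is correct and is essentially the paper's own proof: after integrating by parts and using $B'=g$, your Laplace integral $\int_0^\infty e^{-\eta}B\big((t-T_2)\eta\big)\,d\eta$ coincides exactly with the paper's $d_0+\int_0^{-\infty}e^{-\xi/t}g(\xi)\,d\xi$, your derivative formula and no-loss bound via \eqref{Y3} are precisely \eqref{Y5}--\eqref{Y6}, and both arguments finish by multiplying with a Gevrey cutoff of order $\sigma\in(1,2)$. The only (welcome) difference is that where the paper appeals to ``a slight modification of the proof of Lemma \ref{lem3}'', you actually carry out that modification, performing the Leibniz estimate directly in the $(2n)!/R^{2n}$ normalization via $\binom{n}{j}(j!)^2\,(2(n-j))!\le (2n)!$, which is exactly what is needed to keep the radius $R$ unchanged.
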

\begin{proof}
Applying a translation in the variable $t$, we may assume that $T_2=0$ without loss of generality. 
Let 
\[
f(t) =d_0+ \int_0^{-\infty} e^{-\xi/t} g(\xi ) d\xi,\quad  t<0.
\]
By \eqref{Y3}, $f$ is well defined and 
\[
|f(t)-d_0| \le C| g(0)  \int _0^{-\infty} e^{-\xi/t}  d\xi | =  C|tg(0)|,
\]
so that $f(0^-)=d_0$. Applying Lebesgue dominated convergence theorem and a change of variables, we infer that
\[
f'(t) = \int_0^{-\infty} e^{-\xi /t} \xi t^{-2} g (\xi ) d\xi = \int_0^\infty e^{-s} g(ts) sds.  
\] 
We obtain by an easy induction that for $n\ge 1$ and $t<0$
\be
\label{Y5}
f ^{(n)} (t ) =\int_0^\infty e^{-s} g ^{(n-1)} (ts) s^n ds. 
\ee
Thus $f\in C^\infty ((-\infty , 0])$, and using \eqref{Y2} we obtain
\be
\label{Y6}
f^{(n)}(0)=d_n \quad \forall n\in \N .
\ee 
On the other hand, we have by \eqref{Y1}, \eqref{Y3} and \eqref{Y5}-\eqref{Y6} that
\[
| f ^{ (n) } (t) | \le C | f^{ (n) } (0) | \le MC \frac{ (2n)! }{ R^{2n} } \cdot 
\]
Multiplying $f$ by a function $g\in G^\sigma ([T_1,0])$ with $1<\sigma <2$ such that  $g^{(n)}(T_1)= g^{(n)}(0)=0$ for $n\ge 1$,
$g(T_1)=0$,  and $g(0)=1$, we infer from a slight modification of the proof of Lemma \ref{lem3} that $fg$ satisfies \eqref{Y3a}-\eqref{Y4}. 
The proof of Theorem \ref{thm2} is complete.
\end{proof}
\noindent
\begin{remark}
\begin{enumerate}
\item Theorem \ref{thm2} shows that for certain sequences $(d_n)_{n\ge 0} $, Borel Theorem can be established {\em without} any loss 
in the factor $R$. It requires the (quite conservative) assumption \eqref{Y3}. Recall (see \cite{rudin}) that for any open set $\Omega\subset \C$,  one can find a function
$g\in H(\Omega )$ which has no holomorphic extension to any larger region.  
\item Condition \eqref{Y3} is satisfied e.g. for $g(z) := \exp (z)$. This corresponds to the sequence  $d_n=n!$ for all $n\ge 0$. 
\item Condition \eqref{Y3} is also satisfied for 
\[
g(z) := (z-z_0)^{-k},
\]
when $k\in \N ^*$ and  $z_0\in \C_+ :=\{ z = x + i  y ,\ x>0 \}$. Indeed, 
\[
g^{(n)} (z) =(-1)^n \frac{k(k+1)\cdots (k+n -1)}{(z-z_0)^{k+n} }
\]
and hence
\[
| g^{ (n) } (z) |  \le | g^{ (n) } (0)| ,\quad \forall z\in \R _-,\ \forall n\in \N ,   
\]
since $|z-z_0|\ge |z_0|$ for $z\in \R _-$ and $n\in \N$. 
\end{enumerate}
\end{remark}
\begin{corollary} 
\label{cor3}
Let $T>0$ and 
\be
\label{Y10}
\theta _T (x) := \sum_{n\ge 0} d_n \frac{ x^{2n} }{ (2n) ! }
\ee
with a sequence $(d_n)_{n\ge 0}$ as in Theorem \ref{thm2}. Take $(T_1,T_2)=(0,T)$, and pick a function $f$ as in Theorem \ref{thm2}.  
Then the control input $h(t):=\sum_{ n\ge 0} \frac{f^{(n)}(t)}{(2n-1)!}$ is Gevrey of order 2 on $[0,T]$, and it steers 
the solution $\theta$ of \eqref{A1}-\eqref{A3} from $0$ at $t=0$ to $\theta _T$ at $t=T$.   
\end{corollary}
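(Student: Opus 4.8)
The plan is to recognize that this corollary is the complex-variable counterpart of Theorem \ref{thm1}: the interpolating function $f$ furnished by Theorem \ref{thm2} plays exactly the role that the function $y$ played in the flatness representation \eqref{A4}. I would therefore set $y:=f$ on $[0,T]$ and feed it into the machinery already assembled in this subsection, the only difference with Theorem \ref{thm1} being that the interpolant now comes from Theorem \ref{thm2} (hence \emph{without loss}) rather than from Proposition \ref{prop2}.

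First I would verify that $f$ meets the hypothesis of Proposition \ref{prop1}. By \eqref{Y4} we have $|f^{(i)}(t)|\le M'(2i)!/R^{2i}$ for all $i\ge 0$ and all $t\in[0,T]$, with the same $R>1$ as in \eqref{Y1}; this is precisely the estimate \eqref{A5}. Hence Proposition \ref{prop1} guarantees that the function
\[
\theta(x,t):=\sum_{i\ge 0}\frac{x^{2i}}{(2i)!}\,f^{(i)}(t)
\]
is well defined on $[0,1]\times[0,T]$ and belongs to $G^{1,2}([0,1]\times[0,T])$. Because the series of all the derivatives converges uniformly, the term-by-term differentiation is licit, and a direct computation gives $\theta_t=\theta_{xx}$, so that \eqref{A1} holds. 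From $\theta_x(x,t)=\sum_{i\ge 1}\frac{x^{2i-1}}{(2i-1)!}f^{(i)}(t)$ one reads off $\theta_x(0,t)=0$ and $\theta_x(1,t)=\sum_{i\ge 1}\frac{f^{(i)}(t)}{(2i-1)!}=h(t)$, where the equality with $h$ uses the convention $1/(-1)!=0$ that removes the $n=0$ term in the definition of $h$. Since $\theta\in G^{1,2}([0,1]\times[0,T])$, the trace $h=\theta_x(\cdot,1)$ of the order-one $x$-derivative is Gevrey of order $2$ in $t$, which establishes the Gevrey claim.

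It then remains to check the two endpoint conditions. At $t=0$, property \eqref{Y3a} gives $f^{(i)}(0)=0$ for every $i$, whence $\theta(x,0)\equiv 0$ and the state is steered from rest. At $t=T$, property \eqref{Y3b} gives $f^{(i)}(T)=d_i$, so $\theta(x,T)=\sum_{i\ge 0}d_i\,x^{2i}/(2i)!=\theta_T(x)$ by \eqref{Y10}. Thus $h$ steers the solution of \eqref{A1}-\eqref{A3} from $0$ to $\theta_T$ in time $T$.

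I do not expect a serious obstacle here: the argument is essentially a transcription of the concluding paragraph of the proof of Theorem \ref{thm1}, with the interpolant produced by Theorem \ref{thm2}. The only points requiring a little care are the bookkeeping of the convention $1/(-1)!=0$ in the definition of $h$ and the justification of the term-by-term differentiation (in particular for verifying $\theta_t=\theta_{xx}$ and the boundary traces), both of which are already supplied by the uniform convergence of the derivative series established in Proposition \ref{prop1}.
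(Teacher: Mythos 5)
Your proposal is correct and follows exactly the route the paper intends: the corollary is stated as an immediate consequence of Theorem \ref{thm2} combined with Proposition \ref{prop1}, mirroring the concluding argument of Theorem \ref{thm1} (verify \eqref{Y4} gives \eqref{A5} with the same $R>1$, then use \eqref{Y3a}--\eqref{Y3b} for the endpoint conditions). Your added care about the convention $1/(-1)!=0$ and the term-by-term differentiation is consistent with what the paper leaves implicit.
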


As an example of application of Corollary  \ref{cor3}, pick any $\zeta =r e^{i\theta } \in \C$ with $r>1/2$ and $ | \theta  | <\pi /4$, and let
\[
g(z) :=  \zeta ^{-2}(1-\frac{z}{\zeta ^2} )^{-2} = \sum_{n\ge 0} \frac{d_{n+1}}{ n! (n+1)! } z^n 
\] 
where 
\[
d_n:= \frac{ (n !)^2 }{ \zeta ^{2n} } \cdot
\]
Then \eqref{Y1}-\eqref{Y3} hold with $C=1$, any $R\in (1, 2 |\zeta |)$, and some $M>1$. Thus the  state $\theta _T$  given in 
\eqref{Y10} is reachable from 0 in time $T$. Note that the radius of convergence of the series in \eqref{Y10} can  be chosen arbitrarily close to 1.  
\subsection{Dirichlet control} 
Let us turn now our attention to the Dirichlet   control of the heat equation. We consider the control system
\ba
\phi _t -\phi _{xx}=0, \quad && x\in (0,1), \ t\in (0,T), \label{EE1}\\
\phi (0,t)=0, \ \phi (1,t) =k(t),&&t\in (0,T), \label{EE2}\\
\phi (x,0)=\phi _0(x), && x\in (0,1). \label{EE3} 
\ea 
We search a solution in the form 
\be
\phi (x,t) = \sum_{i\ge 0} \frac{x^{2i+1 } }{(2i+1)!} z^{(i)} (t)  \label{EE4}
\ee
where $z\in G^2 ([0,T])$. The following result is proved in exactly the same way as Proposition \ref{prop1}. 
\begin{proposition}
\label{prop11}
Assume that for some constants $M>0$, $R>1$, we have 
\be
|z^{(i)} (t)| \le M \frac{(2i +1)!}{R^{2i+1}} \qquad \forall i\ge 0, \ \forall t\in [0,T]. \label{EE5} 
\ee 
Then the function $\phi$ given in \eqref{EE4} is well defined on $[0,1]\times [0,T]$, and 
%Gevrey of order $1$ 
%in $x$ and $2$ in $t$ in $[0,1]\times [0,T]$. That is, there exist some positive constants $C,R_1,R_2$ such that 
\be
\label{EE6}
\phi \in G^{1,2}([0,1]\times [0,T]).
%|\partial _t^m\partial _x^n \theta (x,t) | \le C \frac{(m!)^2}{R_1^m}\frac{n!}{R_2^n} \qquad \forall m,n\in \N, \ \forall (x,t) \in [0,1]\times [0,T]. 
\ee
\end{proposition}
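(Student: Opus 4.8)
The plan is to follow the proof of Proposition~\ref{prop1} essentially verbatim, the only modification being the replacement of the exponent $2i$ by $2i+1$ in the general term. First I would differentiate the series \eqref{EE4} term by term, obtaining for $2i+1-n\ge 0$
\[
\partial _t^m\partial _x^n \Big[ \frac{x^{2i+1}}{(2i+1)!} z^{(i)}(t) \Big] = \frac{x^{2i+1-n}}{(2i+1-n)!}\, z^{(i+m)}(t),
\]
and then bound each term on $[0,1]\times[0,T]$ using the hypothesis \eqref{EE5}.

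Next, setting $j := 2i+1-n\ge 0$ and $N := n+2m$ (so that $2i+2m+1 = j+N$), the hypothesis yields
\[
\Big| \frac{z^{(i+m)}(t)}{(2i+1-n)!} \Big| \le M\,\frac{(2i+2m+1)!}{(2i+1-n)!\, R^{2i+2m+1}} = M\,\frac{(j+N)!}{j!\, R^{j+N}}.
\]
This is exactly the bound obtained in the proof of Proposition~\ref{prop1}; the shift of the exponent by one has no effect on the structure of the estimate once the index substitution is performed. Hence the sum $S:=\sum_{i,\,2i+1-n\ge 0} |\partial _t^m\partial _x^n[\frac{x^{2i+1}}{(2i+1)!}z^{(i)}]|$ is majorized by the very same series $M\sum_{j\ge 0} (j+N)!/(j!\,R^{j+N})$ treated there.

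From here I would reproduce the remaining estimates without change: grouping the $j$-sum into blocks of length $N$, bounding each block by $N\,[(k+2)N]^N / R^{(k+1)N}$, summing the resulting series after extracting a factor $R^{\sigma(k+1)}$ with $\sigma\in(0,1)$, and finally invoking Stirling's formula together with $N = n+2m$ and $N! \le 2^{n+2m} n!(2m)!$. This produces a constant $M''>0$ and radii $R_1,R_2>0$ with $S \le M'' (m!)^2 R_1^{-m}\, n!\, R_2^{-n}$, which both establishes the uniform convergence on $[0,1]\times[0,T]$ of every series of derivatives (so that $\phi\in C^\infty$) and gives the Gevrey bound
\[
|\partial _t^m\partial _x^n \phi(x,t)| \le M'' \frac{(m!)^2}{R_1^m}\frac{n!}{R_2^n},
\]
that is, \eqref{EE6}.

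Since the argument is an almost line-by-line transcription of the proof of Proposition~\ref{prop1}, there is no genuinely new obstacle. The only point requiring care, and the one I would verify explicitly, is that the substitution $j = 2i+1-n$ reduces the Dirichlet estimate to precisely the same summation $\sum_{j\ge 0}(j+N)!/(j!\,R^{j+N})$ as in the Neumann case, so that the odd exponent introduces no additional loss in the radius $R$.
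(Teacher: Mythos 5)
Your proposal is correct and coincides with the paper's own argument: the paper simply states that Proposition~\ref{prop11} ``is proved in exactly the same way as Proposition~\ref{prop1},'' and your substitution $j=2i+1-n$, $N=n+2m$ (so $2i+2m+1=j+N$) is exactly the verification that the odd exponent reduces to the same series $M\sum_{j\ge 0}(j+N)!/(j!\,R^{j+N})$. Nothing further is needed.
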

With Proposition \ref{prop11} at hand, we can obtain the following
\begin{theorem}
\label{thm11} 
Pick any $\phi _T\in G^1([0,1])$ written as
\be
\phi _T (x) =\sum_{i\ge 0} c_{2i+1} \frac{ x^{2i+1} }{ (2i +1 )! }
\label{EE7} 
\ee
with
\be
 | c_{2i +1 } | \le M\frac{(2i +1 )!}{R^{2i +1 } }
\label{EE8} 
\ee
for some $M>0$ and $R>R_0$. Then for any $T>0$ and any $R' \in (R_0,R)$, one can pick a function $z\in G^2 ([0,T])$ such that 
\ba 
z^{(i)}(0)=0 \quad &&\forall i\ge 0, \label{EE9}\\
z^{(i)} (T) =c_{2i + 1} \quad &&\forall i\ge 0, \label{EE10}\\
|z^{ (i) } (t) | \le M'  (\frac{R'}{R} )^{2i + 1 }  (2i + 1 )! \quad && \forall t\in [0,T], \ \forall i\ge 0. \label{EE11}
\ea
Thus, the control input $k(t):=\sum_{i\ge 0} \frac{z^{(i)}(t)}{(2i+1)!}$ is Gevrey of order 2 on $[0,T]$ by Proposition \ref{prop11}, and it steers 
the solution $\phi$ of \eqref{EE1}-\eqref{EE3} from $0$ at $t=0$ to $\phi _T$ at $t=T$.   
\end{theorem}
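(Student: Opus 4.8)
The plan is to mirror the proof of Theorem \ref{thm1} verbatim, the only structural change being that every even index $2i$ is replaced throughout by the odd index $2i+1$. By Proposition \ref{prop11} it suffices to produce a function $z\in C^\infty([0,T])$ satisfying \eqref{EE9}--\eqref{EE11}; the passage from this function to a Gevrey-$2$ control $k$ steering the solution $\phi$ of \eqref{EE1}--\eqref{EE3} from $0$ to $\phi_T$ is then automatic from \eqref{EE4} and Proposition \ref{prop11}. So the entire task reduces to an application of the interpolation result Proposition \ref{prop2}.

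First I would set up the interpolation data. Put $d_q:=c_{2q+1}$ and choose the weight sequence so that $M_q=(a_0\cdots a_q)^{-1}=(2q+1)!$; explicitly $a_0=1$ and $a_q=[2q(2q+1)]^{-1}$ for $q\ge 1$, since $M_q/M_{q-1}=(2q+1)(2q)$. This sequence is nonincreasing, summable, and satisfies $pa_p+\sum_{k>p}a_k\le A\,pa_p$ for a suitable $A$ (because $a_p\sim(4p^2)^{-1}$, so both $pa_p$ and the tail $\sum_{k>p}a_k$ are of order $p^{-1}$, with bounded ratio), whence all hypotheses of Proposition \ref{prop2} hold. From \eqref{EE8} we get $|d_q|\le (M/R)(R^{-2})^q M_q$, i.e.\ \eqref{B1} holds with $H=R^{-2}$ and constant $M/R$. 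Since $R'>R_0=e^{(2e)^{-1}}$, the condition $R'>R_0$ is exactly what makes the interval $\big(e^{e^{-1}}R^{-2},(R'/R)^2\big)$ nonempty; I pick $\tilde H$ in it and apply Proposition \ref{prop2} to obtain $f\in C^\infty(\R)$ with $f^{(i)}(0)=c_{2i+1}$ and $|f^{(i)}(x)|\le C\tilde H^i(2i+1)!$ for all $i\ge 0$ and all $x\in\R$, by \eqref{B2}--\eqref{B3}.

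Next I would transplant $f$ to the endpoint $t=T$ and enforce flatness at $t=0$ by a cutoff, exactly as at the end of the proof of Theorem \ref{thm1}. Choosing $g\in G^\sigma([0,T])$ with $1<\sigma<2$, $g^{(i)}(0)=0$ for all $i\ge 0$, $g(T)=1$ and $g^{(i)}(T)=0$ for $i\ge 1$, I set $z(t):=f(t-T)g(t)$. Then \eqref{EE9} and \eqref{EE10} follow at once from Leibniz' rule and the vanishing of the derivatives of $g$: at $t=0$ all terms die, while at $t=T$ only the term carrying $g(T)=1$ survives, giving $z^{(i)}(T)=f^{(i)}(0)=c_{2i+1}$. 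For the remaining estimate \eqref{EE11} I would pass through the Gevrey-$2$ scale: using Stirling's formula in the form $(2i+1)!\le C\sqrt{i+1}\,4^i(i!)^2$, the bound on $f$ becomes $|f^{(i)}(x)|\le C\sqrt{i+1}\,(4\tilde H)^i(i!)^2$, so $f$ and hence $f(\cdot-T)$ lie in $G^2$; Lemma \ref{lem3} (with $s=2$, $\sigma\in(1,2)$) then gives $|z^{(i)}(t)|\le C(4\tilde H(1+\epsilon))^i(i!)^2$ with the \emph{same} base as for $f$ up to an arbitrarily small inflation absorbing the $\sqrt{i+1}$ factor. Converting back via $(i!)^2\le 4^{-i}(2i+1)!$ and recalling $\tilde H<(R'/R)^2$ (so that $\tilde H(1+\epsilon)<(R'/R)^2$ for $\epsilon$ small), I obtain $|z^{(i)}(t)|\le C((R'/R)^2)^i(2i+1)!\le M'(R'/R)^{2i+1}(2i+1)!$, which is \eqref{EE11}.

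Everything here rests on results already established, so there is no genuinely new obstacle; the only delicate point is the bookkeeping in the last paragraph, namely shuttling between the $(2i+1)!$ and $(i!)^2$ normalizations while carrying the polynomial Stirling factors into negligible changes of the base, and checking that the sharp threshold $R_0$ propagates correctly through the nonemptiness of the admissible range for $\tilde H$. This is precisely the step that determines the sharp constant, and it is the analogue of the corresponding computation in Theorem \ref{thm1}.
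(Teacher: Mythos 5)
Your proof is correct, but it follows a different route from the paper's own proof of Theorem \ref{thm11}. You re-instantiate the whole machinery of Theorem \ref{thm1} in the odd setting: Proposition \ref{prop2} is applied directly with the weights $a_0=1$, $a_q=[2q(2q+1)]^{-1}$ (so $M_q=(2q+1)!$), with $H=R^{-2}$ and constant $M/R$ coming from \eqref{EE8}, and $\tilde H\in(e^{e^{-1}}H,(R'/R)^2)$, this interval being nonempty precisely because $R'>R_0$; you then transplant via the cutoff $z(t)=f(t-T)g(t)$ and shuttle between the $(2i+1)!$ and $(i!)^2$ normalizations with Lemma \ref{lem3}. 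All of this is sound: your weight sequence is nonincreasing, summable, and satisfies the tail condition $pa_p+\sum_{k>p}a_k\le A\,pa_p$ (both sides are of order $p^{-1}$), and your conversions $(2i+1)!\le C\sqrt{i+1}\,4^i(i!)^2$ and $(i!)^2\le 4^{-i}(2i+1)!$, with the $\sqrt{i+1}$ factor absorbed into an arbitrarily small inflation of the base, do give \eqref{EE11}. The paper instead proves Theorem \ref{thm11} by a short reduction to (the proof of) Theorem \ref{thm1}: it picks auxiliary radii $R_0<\tilde R'<R'<\tilde R<R$ with $\tilde R'/\tilde R<R'/R$, absorbs the factor $2i+1$ into a geometric one so that \eqref{EE8} yields $|c_{2i+1}|\le\tilde M(2i)!/\tilde R^{2i}$, invokes the even-factorial construction for these data to get $|z^{(i)}(t)|\le\tilde M(\tilde R'/\tilde R)^{2i}(2i)!$, and converts back using $\tilde R'/\tilde R<R'/R$ and $(2i)!\le(2i+1)!$. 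The trade-off: the paper's reduction is shorter and reuses the even case as a black box, with no need to re-verify the hypotheses of Proposition \ref{prop2} for a new sequence, at the price of introducing the auxiliary radii; your direct instantiation is longer but keeps the constants transparent and shows that the threshold $R_0$ enters the odd case for exactly the same reason as in the even case, namely through the nonemptiness of the admissible interval for $\tilde H$. Both arguments yield the same conclusion with the same loss $R_0$.
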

\begin{proof}
Pick $\tilde R$ and  $\tilde R'$ such that 
\[
R_0 < \tilde  R' <  R' < \tilde R < R \quad \textrm{ and } \frac{\tilde R'}{\tilde R} < \frac{R'}{R}  \cdot
\]
Then, from \eqref{EE8}, we have that for some constant $\tilde M>0$:
\[
|c_{2i+1} | \le \tilde M \frac{ (2i) !} { \tilde R ^{2i} }\cdot
\]
From the proof of Theorem \ref{thm1}, we infer the existence of $z\in C^\infty ([0,T])$ such that \eqref{EE9}-\eqref{EE10} hold and such that we have
for some $\tilde M>0$ 
\begin{equation}
| z^{ (i) } (t) | \le \tilde M (\frac{\tilde R'}{\tilde R }) ^{2i} (2i)!  \le \tilde M (\frac{ R' }{ R } ) ^{2i} (2i)! \quad \forall t\in [0,T], \ \forall i\ge 0.
\label{EE12} 
\end{equation} 
Then \eqref{EE11} follows from \eqref{EE12} by letting $M':=\tilde M R/R'$. 
\end{proof}
\subsection{Two-sided control}
Let $(\alpha_0,\beta _0), (\alpha _1, \beta _1)\in \R ^2\setminus \{ (0,0) \} $.  We are concerned here with the control problem:
\ba
\psi _t  -\psi _{xx}=0, && x\in (-1,1), \ t\in (0,T), \label{F1}\\
\alpha _0\psi (-1,t) + \beta _0 \psi _x (-1,t) =h_0(t),&&t\in (0,T), \label{F2}\\
\alpha _1\psi (1,t) + \beta _1 \psi _x (1,t) =h_1(t),&&t\in (0,T), \label{F3}\\
\psi (x,0)=0, && x\in (-1,1). \label{F4} 
\ea Then the following result holds.
\begin{theorem}
Let $T>0$ and $R>R_0$. Pick any $\psi _T\in H(D(0,R))$. Then one may find two control functions $h_0,h_1\in G^2([0,T])$ such that the solution $\psi$ of 
\eqref{F1}-\eqref{F4} belongs to $G^{1,2}([-1,1]\times [0,T])$ and it satisfies 
\be
\psi (x,T)=\psi_T(x)\quad \forall x\in [-1,1].
\ee 
\end{theorem}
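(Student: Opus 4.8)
The plan is to reduce the two-sided problem to the two one-sided constructions already carried out, by splitting $\psi_T$ into its even and odd parts and superposing the corresponding flatness series. Since the disc $D(0,R)$ is symmetric about the origin, the functions
\[
\psi_T^e(z):=\tfrac12\big(\psi_T(z)+\psi_T(-z)\big),\qquad \psi_T^o(z):=\tfrac12\big(\psi_T(z)-\psi_T(-z)\big)
\]
are both holomorphic on $D(0,R)$, and $\psi_T=\psi_T^e+\psi_T^o$. Writing $c_n:=\psi_T^{(n)}(0)$, the even part has the Taylor expansion $\psi_T^e(x)=\sum_{i\ge0}c_{2i}x^{2i}/(2i)!$ and the odd part $\psi_T^o(x)=\sum_{i\ge0}c_{2i+1}x^{2i+1}/(2i+1)!$, both convergent on $[-1,1]\subset D(0,R)$ since $R>R_0>1$.

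First I would fix the coefficient growth. As $R>R_0$, pick $R_1\in(R_0,R)$; since $\psi_T$ is bounded on $\overline{D(0,R_1)}$, the Cauchy estimates give a constant $M>0$ with $|c_n|\le M\,n!/R_1^{\,n}$ for all $n$, hence in particular $|c_{2i}|\le M(2i)!/R_1^{2i}$ and $|c_{2i+1}|\le M(2i+1)!/R_1^{2i+1}$. These are exactly the hypotheses \eqref{A12} and \eqref{EE8} with $R_1$ in place of $R$.

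Next, fixing any $R'\in(R_0,R_1)$, I apply Theorem \ref{thm1} to $(c_{2i})_{i\ge0}$ to obtain $y\in G^2([0,T])$ with $y^{(i)}(0)=0$, $y^{(i)}(T)=c_{2i}$ and $|y^{(i)}(t)|\le M'(R'/R_1)^{2i}(2i)!$, and Theorem \ref{thm11} to $(c_{2i+1})_{i\ge0}$ to obtain $z\in G^2([0,T])$ with $z^{(i)}(0)=0$, $z^{(i)}(T)=c_{2i+1}$ and $|z^{(i)}(t)|\le M'(R'/R_1)^{2i+1}(2i+1)!$. Because $R_1/R'>1$, these bounds are of the form required in \eqref{A5} and \eqref{EE5}, so Propositions \ref{prop1} and \ref{prop11} guarantee that
\[
\theta(x,t):=\sum_{i\ge0}\frac{x^{2i}}{(2i)!}y^{(i)}(t),\qquad \phi(x,t):=\sum_{i\ge0}\frac{x^{2i+1}}{(2i+1)!}z^{(i)}(t)
\]
are well defined and belong to $G^{1,2}([-1,1]\times[0,T])$ (the estimates hold for $|x|\le1$ just as for $x\in[0,1]$). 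Setting $\psi:=\theta+\phi$, which is again in $G^{1,2}$ as a sum, termwise differentiation shows $\partial_t\psi=\partial_{xx}\psi$ (the index shift $i\mapsto i-1$ matches $\partial_t$ with $\partial_{xx}$), while $y^{(i)}(0)=z^{(i)}(0)=0$ gives $\psi(\cdot,0)\equiv0$, and $y^{(i)}(T)=c_{2i}$, $z^{(i)}(T)=c_{2i+1}$ give $\psi(x,T)=\psi_T^e(x)+\psi_T^o(x)=\psi_T(x)$ on $[-1,1]$.

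Finally I would read off the controls: define $h_0(t):=\alpha_0\psi(-1,t)+\beta_0\psi_x(-1,t)$ and $h_1(t):=\alpha_1\psi(1,t)+\beta_1\psi_x(1,t)$, so that \eqref{F2}--\eqref{F3} hold by construction and $\psi$ solves \eqref{F1}--\eqref{F4}. Since $\psi\in G^{1,2}([-1,1]\times[0,T])$, the traces $t\mapsto\psi(\pm1,t)$ and $t\mapsto\psi_x(\pm1,t)$ are Gevrey of order $2$ on $[0,T]$ (take $p_1=0$, resp. $p_1=1$, in the definition of $G^{1,2}$), whence $h_0,h_1\in G^2([0,T])$. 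The point I expect to require the most care is precisely this decoupling: with two boundary controls available there is no homogeneous boundary condition to enforce, so the even/odd split separates the problem into the two independent interpolation problems already solved in Theorems \ref{thm1} and \ref{thm11}; the Robin data at $x=\pm1$ then comes for free from the interior Gevrey regularity, and no parity restriction on $\psi_T$ is needed, in contrast with the one-sided results.
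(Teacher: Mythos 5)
Your proposal is correct and follows essentially the same route as the paper: decompose $\psi_T$ into its even and odd parts (equivalently, split the Taylor coefficients $c_n=\psi_T^{(n)}(0)$ by parity), control the coefficient growth via the Cauchy estimates on a slightly smaller disc, apply Theorem \ref{thm1} to the even part and Theorem \ref{thm11} to the odd part, superpose the two flatness series (extended to $[-1,1]$ by parity), and read off $h_0,h_1$ from the Robin boundary expressions using the $G^{1,2}$ regularity furnished by Propositions \ref{prop1} and \ref{prop11}. No gaps; this matches the paper's argument step for step.
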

\begin{proof}
Since $\psi _T\in H(D(0,R))$, $\psi _T$ can be expanded as 
\[
\psi _T (z) = \sum_{i\ge 0} c_i \frac{ z^i }{ i !} \qquad \textrm{ for } |z| <R
\] 
where $c_i:=\psi _T^{ (i) } (0)$ for all $i\ge 0$.   Pick any $R'\in (R_0,R)$. It follows from the Cauchy inequality (see e..g \cite{titchmarsh}) that  
\[
|c_i| \le \sup_{|z| \le R'} |\psi _T (z) | \frac{ i ! } {{R'} ^i} =: M  \frac{ i ! } { {R'}^i}\cdot 
\]
Let 
\[
\theta _T (x) := \sum _{i\ge 0} c_{2i} \frac{ x^{2i } }{ (2i)! } \textrm { and } 
\phi _T (x) := \sum _{i\ge 0} c_{2i +1} \frac{ x^{2i +1} }{ ( 2i +1) ! }  \textrm{ for } x\in (-R',R'). 
\]
Let $y$ (resp. $z$) be as given by Theorem \ref{thm1} (resp. Theorem \ref{thm11}), and let 
\[
\psi (x,t) := \theta (x,t) + \phi (x,t) = \sum_{ i\ge 0 } \frac{ x^{2i} }{ (2i)! } y^{ (i) } (t) + \sum_{ i\ge 0 } \frac{ x^{2i+1} }{ (2i +1)! } z^{ (i) } (t), \quad \textrm{ for } 
x\in [0,1],\ t\in [0,T]. 
\]
It follows from  Theorems \ref{thm1} and \ref{thm11} that $\psi \in G^{1,2}([0,1]\times [0,T])$. Note that the function $\theta$ (resp. $\phi$) can be extended
as a smooth function on $[-1,1]\times [0,T]$ which is even (resp. odd) with respect to $x$. Thus  $\theta, \phi, \psi  \in G^{1,2}([-1,1]\times [0,T])$.
Define $h_0$ and $h_1$ by \eqref{F2} and \eqref{F3}, respectively. Then $h_0,h_1\in G^2([0,T])$, and $\psi$ solves \eqref{F1}-\eqref{F4} together with 
 \[
 \psi (x,T)=\theta _T (x) + \phi _T (x) =\psi _T(x), \quad \forall x\in [-1,1]. 
 \]
\end{proof}
\begin{remark}
\begin{enumerate}
\item The control functions $h_0,h_1$ take complex values if the target function $\psi_T$ does on $(-1,1)$ (i.e. if at least one $c_i\in \C$). If, instead, $\psi_T$
takes real values on $(-1,1)$, then we can as well impose that both control functions $h_0,h_1$ take real values by extracting the real part of each term in \eqref{F1}-\eqref{F3}. 
\item %Pick $(\alpha _0,\beta _0)=(\alpha _1, \beta _1) =(1,0)$. 
For any given $a\in \R_+$, let 
\[
\psi _T(z) := \frac{1}{z^2+a^2}\quad \textrm{ for } z\in D(0,|a|). 
\]
Then $\psi _T\in H(D(0,|a|))$, and for $|a| > R_0$, we can find a pair of control functions $(h_0,h_1) \in G^2 ([0,T]) ^2$ driving the solution 
of \eqref{F1}-\eqref{F4} to $\psi _T$ at $t=T$. If $(\alpha _0,\beta _0)=(0,1)$, then it follows from Theorem \ref{thm1} that we can reach $\psi_T$ on $(0,1)$  
with only one control, namely $h_1$  (letting $\psi _x(0,t)=0$). Indeed, both conditions \eqref{A11} and \eqref{A12} are satisfied, for $\psi_T$ is analytic in 
$H(D(0,|a|))$ and even. Similarly, If $(\alpha _0,\beta _0)=(1,0)$, then it follows from Theorem \ref{thm11} that we can reach $x\psi_T(x)$ on $(0,1)$ 
with solely the control $h_1$. 

On the other hand, if  $|a|<1$, then by Theorem \ref{thm5} there is no pair $(h_0,h_1)\in L^2(0,T)^2$ 
driving the solution  of \eqref{F1}-\eqref{F4}  to $\psi _T$ at $t=T$. 
\end{enumerate}
\end{remark}
\section{Further comments}
In this paper, we showed that for the boundary control of the one-dimensional heat equation, the reachable states are those functions that can be extended as (complex) analytic
functions on a sufficiently large domain. In particular, for the system 
\begin{eqnarray}
\psi _t-\psi _{xx} =0,&& \quad x\in (-1,1), \ t\in (0,T), \label{LL1}\\
\psi (-1,t)=h_0(t),&&\quad t\in (0,T), \label{LL2}\\
\psi (1,t)=h_1(t),&& \quad t\in (0,T), \label{LL3}\\
\psi (x,0)=\psi _0(x)=0,&&  \quad x\in (-1,1),\label{LL4}
\end{eqnarray} 
the set of reachable states 
\[
{\mathcal R}_T :=\{ \psi (.,T);\ h_0,h_1\in L^2(0,T) \} 
\]
satisfies 
\[
H ( D(0,R_0) ) \subset {\mathcal R}_T  \subset H ( \{ z=x+iy;\ |x|+|y| <1 \} ) 
\]
where $R_0:=e^{ (2e)^{-1}}>1.2$. 

Below are some open questions: 
\begin{enumerate}
\item Do we have for any $R>1$
\[
H(D(0,R)) \subset {\mathcal R}_T?
\]
\item Do we have 
\[
{\mathcal R}_T \subset H(D(0,1))?
\]
\item For a given $\rho _0 >1$ and a given $(d_i)_{i\ge 0}$, can we solve the interpolation problem \eqref{P32} with a loss $\rho \le \rho _0$? or without any loss?
\item  Can we extend Theorem \ref{thmintro} to the $N-$dimensional framework?
\end{enumerate}
As far as the complex variable approach is concerned, it would be very natural to replace in Laplace transform the integration over $\R _-$ by an integration over a finite interval. The main advantage would be to remove the assumption 
that the function $g$ in  we don't need to assume that the function  $g$ in \eqref{Y2} be analytic in a neighborhood of $\R^-$. 
Unfortunately, we can see that with this approach the loss cannot be less than $2$.  
For the sake of completeness, we give in appendix the proof of the following results concerning this approach. The first one asserts that the loss is at most $2^+$  for any function, and the second one that the loss is at least $2^+$ for certain functions. 
\begin{theorem}
\label{thm50}
Let $(a_n)_{n\ge 2}$ be a sequence of complex numbers satisfying
\be
\label{XYZ1}
|a_n|   \le  C\frac{n!}{R_0^n},\quad \forall n\ge 2
\ee
for some positive constants $C,R_0$.
Pick any $R\in (0,R_0)$ and let 
\be
G(x):=\int_0^R \phi (t) \exp (- t / x )  dt, \quad x\in (0,+\infty),  
\ee
where the function $\phi$ is defined by
\be
\phi (z):= \sum_{n\ge 2} a_n \frac{z^{n-1}}{ (n-1) ! }, \quad |z| <R_0.
\ee
Then $G^{(n)}(0^+)=a_n\, n!$ for all $n\ge 2$, and we have 
\be
|G^{ (n) } (x) | \le C'(n!)^2 (2/R)^n, \quad \forall x\in (0,+\infty ) 
\ee
for some constant $C'=C'(R )>0$
\end{theorem}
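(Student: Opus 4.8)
The plan is to treat $G$ as a truncated Laplace (Borel) transform and to prove the two assertions separately: the identification of the asymptotic coefficients $G^{(n)}(0^+)=a_n n!$ (a Watson-lemma statement), and the uniform Gevrey-$2$ bound with rate $R/2$. First I record the two elementary facts that drive everything. From \eqref{XYZ1} one has $|a_{m+1}/m!|\le C(m+1)/R_0^{m+1}$, so the series defining $\phi$ has radius of convergence $R_0$; thus $\phi$ is holomorphic on $\{|z|<R_0\}$, hence smooth on $[0,R]$, and moreover $\phi(0)=0$ while $\phi^{(j-1)}(0)=a_j$ for every $j\ge 2$ (the $(j-1)$-st derivative at $0$ picks out the monomial $a_j t^{j-1}/(j-1)!$).

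The engine for the first assertion is the exact finite Taylor expansion obtained by integrating by parts $N$ times in $t$, using $e^{-t/x}=-x\,\partial_t e^{-t/x}$ together with the above values at $t=0$:
\[ G(x)=\sum_{j=2}^{N} a_j x^j \;-\; e^{-R/x}\sum_{j=1}^{N} x^j\phi^{(j-1)}(R) \;+\; x^N\!\int_0^R \phi^{(N)}(t)e^{-t/x}\,dt. \]
Here all boundary contributions at $t=0$ vanish because $\phi(0)=0$, while those at $t=R$ carry the flat factor $e^{-R/x}$. To obtain $G^{(n)}(0^+)=a_n n!$, I fix $n$, take $N>n$, and differentiate $n$ times: the polynomial piece tends to $a_n n!$ as $x\to 0^+$ (only the $j=n$ term survives), the $e^{-R/x}$ piece is flat at $0$, and for $N$ large enough the remainder together with its first $n$ derivatives tends to $0$ as $x\to 0^+$.

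For the uniform bound I would split $(0,\infty)$ at a threshold comparable to $R$. For $x$ bounded below, exploit that $G$ extends holomorphically to $\C\setminus\{0\}$ (for fixed $t$ the integrand is holomorphic in $x\in\C\setminus\{0\}$, uniformly on compacta) and that $|G(z)|\le\|\phi\|_{L^1(0,R)}$ on the closed right half-plane minus the origin, since $|e^{-t/z}|=e^{-t\,\mathrm{Re}(z)/|z|^2}\le 1$ there. A Cauchy estimate on the disc $\overline{D}(x,\rho)$ with $\rho\uparrow x$ then gives $|G^{(n)}(x)|\le n!\,\|\phi\|_{L^1}/x^n$, which already yields the claimed bound (indeed with the sharper factor $n!$) as soon as $x\ge R/2$. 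For small $x$ I would return to $G^{(n)}(x)=\int_0^R\phi(t)\,\partial_x^n e^{-t/x}\,dt$, where $\partial_x^n e^{-t/x}=x^{-n}e^{-t/x}Q_n(t/x)$ with $Q_{n+1}(s)=(s-n)Q_n(s)-sQ_n'(s)$, and estimate this against the analytic bounds for $\phi$; equivalently, use the exact formula above with $N=n$, for which $G^{(n)}(x)=a_n n!-\partial_x^n E_n(x)+\partial_x^n R_n(x)$, $E_n$ being the $e^{-R/x}$-term and $R_n$ the remainder.

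The main obstacle is precisely this small/intermediate-$x$ regime and the extraction of the sharp constants. A naive modulus bound on the kernel fails: since $\phi$ vanishes only to first order at $0$, estimating $\int_0^R|\phi(t)|\,|\partial_x^n e^{-t/x}|\,dt$ produces a factor $x^{-(n-1)}$ that blows up as $x\to0^+$. What saves the estimate are the sign cancellations carried by $Q_n$, which are exactly what repeated integration by parts makes explicit (each integration trades one factor $1/x$ for one $t$-derivative on $\phi$). Quantitatively, the work reduces to controlling the weighted quantities $\int_0^\infty e^{-s}|Q_n(s)|\,ds$ and $\sup_{s\ge 0}e^{-s}|Q_n(s)|$ and combining them with Cauchy estimates for $\phi^{(n)}$, so that the truncation scale $R$ produces the rate $R/2$ and the universal loss factor $2$. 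This is the delicate, computational heart of the argument, and the constant $2$ emerging here is the same one that the companion lower bound shows cannot be improved.
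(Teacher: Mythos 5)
Your two peripheral steps are correct: the $N$-fold integration by parts does identify $G^{(n)}(0^+)=a_n n!$ (a clean Watson-lemma argument, and arguably more explicit than what the paper writes for this assertion), and your right-half-plane bound $|G(z)|\le \|\phi\|_{L^1(0,R)}$ plus Cauchy estimates on $\overline{D}(x,\rho)$, $\rho\uparrow x$, does give $|G^{(n)}(x)|\le n!\,\|\phi\|_{L^1}(2/R)^n$ for $x\ge R/2$. But the entire content of the theorem lives at the critical scale $x\approx R/(2n)$, where $n!/x^n$ exceeds $(n!)^2(2/R)^n$ by a factor of order $e^n$, and there you offer only a programme which you yourself call ``the delicate, computational heart of the argument.'' That is a genuine gap, and moreover the programme as sketched cannot deliver the stated rate. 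Every route you propose converts powers of $1/x$ into $t$-derivatives of $\phi$ (the $N=n$ integration-by-parts formula, or ``Cauchy estimates for $\phi^{(n)}$''). Since $\phi$ is only guaranteed analytic in $D(0,R_0)$, on $[0,R]$ one only has $|\phi^{(n)}(t)|\le C\, n!\,(R_0-R)^{-n}$ up to polynomial factors, and this is sharp: for $a_p=Cp!/R_0^p$ (allowed by \eqref{XYZ1}) one computes $\phi(t)=\frac{C}{R_0}\left[(1-t/R_0)^{-2}-1\right]$, so $\phi^{(n)}(R)\ge c\,(n+1)!\,(R_0-R)^{-n-2}$; likewise your boundary terms $\phi^{(j-1)}(R)$ grow like $j!\,(R_0-R)^{-j}$. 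At the critical scale the available exponential gain $e^{-R/(2x)}\approx e^{-n}$ cannot absorb $\left(R/(R_0-R)\right)^n$ once $R$ is close to $R_0$, so this scheme produces bounds of the form $(n!)^2\left(c/(R_0-R)\right)^n$, strictly weaker than $(n!)^2(2/R)^n$ for $R_0-R$ small --- yet the theorem claims the bound for \emph{every} $R\in(0,R_0)$. (Your proposed control quantities are also miscalibrated: since $Q_n(s)=(-1)^{n+1}n!\,s+O(s^2)$, already $\sup_{s\ge0}e^{-s}|Q_n(s)|\ge c\,(n-1)!$, attained at $s$ of order $1$, so these weighted norms alone do not encode the cancellation you are counting on.)

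The paper's proof avoids differentiating $\phi$ in $t$ altogether, and this is what keeps $(R_0-R)^{-1}$ out of the geometric rate. It subtracts the Taylor polynomial written as a \emph{complete} Laplace transform, $\sum_{p=2}^n a_p x^p=\int_0^\infty \xi_n(t)e^{-t/x}dt$ with $\xi_n(t)=\sum_{2\le p\le n}a_p t^{p-1}/(p-1)!$, so that $G^{(n)}(x)-a_n n!=L_n^{(n)}(x)-K_n^{(n)}(x)$ as in \eqref{eq:Gn}, where $L_n$ integrates the series tail $\phi_n$ (vanishing to order $n$ at $t=0$) over $[0,R]$, and $K_n$ integrates the partial sum $\xi_n$ over $[R,+\infty)$. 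Derivatives are then taken by the Cauchy formula in the $x$-variable on circles centered at $x$ with radius tending to $x$: on the limiting circle $z=x(1+e^{i\theta})$ one has $\mathrm{Re}(1/z)\equiv 1/(2x)$, hence $|e^{-t/z}|=e^{-t/(2x)}$ exactly --- this is the sole source of the loss factor $2$ --- giving $|L_n^{(n)}(x)|\le n!\,x^{-n}\int_0^R|\phi_n(t)|e^{-t/(2x)}dt$ and similarly for $K_n$. The rate $(2/R)^n$ then comes from the $K_n$ term via $|\xi_n(t)|\le C(t/R)^{n-1}(1-R/R_0)^{-2}/R_0$ for $t\ge R$ (estimate \eqref{eq:Kn}), while $L_n$ has the even better rate $(2/R_0)^n$ (estimate \eqref{eq:Ln}); the dangerous quantity $(1-R/R_0)^{-1}$ enters only the multiplicative constant, never raised to the power $n$. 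To complete your proof you would need to import this decomposition (or an equivalent mechanism with the same property); your large-$x$ argument and your identification of $G^{(n)}(0^+)$ can be kept as they are.
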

\begin{theorem}
\label{thm50a}
Let $R_0>R>0$ and pick any $p\in \N \setminus \{ 0, 1\} $ and any $C\in \R$. Let $(a_n)_{n\ge 2}$ be defined by
\[
a_n =\left\{
\begin{array}{ll}
C \displaystyle \frac{ p ! }{ R^p } & \textrm{ if }n=p, \\[3mm]
0&\textrm{ if } n\ne p. 
\end{array} 
\right.
\] 
Let $G$ and $\phi$ be as in Theorem \ref{thm50}. Then there does not exist a pair $(\hat C,\hat R)$ with $\hat C>0$, 
$\hat R>R$, and 
\be
|G^{ (n) } (x) | \le \hat C (n! )^2 (2/ \hat R)^n, \quad \forall x\in (0,R). 
\ee
\end{theorem}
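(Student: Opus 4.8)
The plan is to make the essential singularity of $G$ at $x=0$ explicit and then to show that, as $n\to\infty$, the quantity $\sup_{x\in(0,R)}|G^{(n)}(x)|$ grows, relatively to $(n!)^2$, at the exact exponential rate $2/R$; that is,
\[
\limsup_{n\to\infty}\Big(\sup_{x\in(0,R)}|G^{(n)}(x)|\Big)^{1/n}\big/(n!)^{2/n}=\frac{2}{R}.
\]
A bound $|G^{(n)}(x)|\le \hat C(n!)^2(2/\hat R)^n$ holding on $(0,R)$ for all $n$ (with $\hat C$ fixed) forces this $\limsup$ to be at most $2/\hat R$, hence $\hat R\le R$, contradicting $\hat R>R$. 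So the whole matter reduces to proving the rate is $2/R$, the upper half of which is just Theorem \ref{thm50}.

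First I would compute $G$ explicitly: since only $n=p$ survives, $\phi(t)=(Cp/R^p)t^{p-1}$, so $G(x)=\kappa\int_0^R t^{p-1}e^{-t/x}\,dt$ with $\kappa=Cp/R^p$. Writing $\int_0^R=\int_0^\infty-\int_R^\infty$ gives $G(x)=\kappa(p-1)!\,x^p-\kappa\,e^{-R/x}P(x)$ with $P$ an explicit polynomial of degree $p$, so all high-order derivative growth comes from the factor $e^{-R/x}$. The key tool is the closed form
\[
\frac{d^n}{dx^n}e^{-t/x}=(-1)^n n!\,x^{-n}e^{-t/x}L_n^{(-1)}(t/x),
\]
with $L_n^{(-1)}$ the Laguerre polynomial (checked by the operator identity $(-u^2\partial_u)^n=(-1)^nu^{n+1}\partial_u^nu^{n-1}$ and Rodrigues' formula). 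Substituting $s=t/x$ yields, for $n>p$,
\[
G^{(n)}(x)=\kappa(-1)^n n!\,x^{p-n}\!\int_0^{R/x}\! s^{p-1}e^{-s}L_n^{(-1)}(s)\,ds.
\]

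Next I would isolate the dominant part of the integral. The identity $\int_0^\infty s^{p-1}e^{-s}L_n^{(-1)}(s)\,ds=0$ (valid for $n>p$) lets me replace $\int_0^{R/x}$ by $-\int_{R/x}^\infty$; integrating by parts $p+1$ times via $\tfrac{d^j}{ds^j}(e^{-s}s^{n-1})=j!\,e^{-s}s^{\,n-1-j}L_j^{(n-1-j)}(s)$ rewrites the tail as a finite combination of the $L_{n-1-k}^{(k)}(R/x)$, $0\le k\le p$. The term $k=0$ dominates (the others carry an extra factor $n^{-k}$), so that with $y=R/x\in(1,\infty)$,
\[
\sup_{x\in(0,R)}|G^{(n)}(x)|\ \asymp\ \kappa\,(n-1)!\,R^{\,p-n}\ \sup_{y>1}\big|y^n e^{-y}L_{n-1}(y)\big|
\]
up to factors polynomial in $n$. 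Everything thus rests on $\sup_{y>1}\big|y^n e^{-y}L_{n-1}(y)\big|\asymp (2n/e)^n$ (again up to polynomial factors). The upper bound is immediate from $|L_{n-1}(y)|\le e^{y/2}$ and $\sup_{y}y^n e^{-y/2}=(2n/e)^n$, and feeds into the rate $\le 2/R$.

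The hard part, and the source of the precise constant $2$, is the matching lower bound. The factor $y^ne^{-y/2}$ is sharply peaked at $y=2n$ with width $\sim\sqrt n$, exactly where $e^{-y/2}L_{n-1}(y)$ sits in its oscillatory bulk; replacing $L_{n-1}$ by its leading monomial there is invalid, and the naive estimate would give the wrong rate $4/R$. Using the Plancherel--Rotach (bulk) asymptotics — near $y=2n$ the zeros of $L_{n-1}$ have spacing $O(1)$ and $e^{-y/2}|L_{n-1}(y)|$ reaches an amplitude $\asymp n^{-1/4}$ at its extrema — I would locate an extremum within $O(1)$ of $y=2n$ where $y^ne^{-y}|L_{n-1}(y)|\ge c\,(2n/e)^n n^{-1/4}$ for some $c>0$; there the subdominant terms, smaller by $n^{-1}$, cannot cancel it. This gives the lower bound on the rate, and hence $\limsup = 2/R$, completing the argument. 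I expect this sharp Laguerre estimate to be the main obstacle: it is precisely the oscillatory cancellation that pulls the rate down from $4/R$ to $2/R$, and capturing that factor requires the fine asymptotics rather than any crude one-sided estimate.
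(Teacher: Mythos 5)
Your plan is correct in substance and follows a genuinely different route from the paper. The paper also starts from $G(x)=a_p x^p+P(x)e^{-R/x}$ with $\deg P=p$, but it then proves a self-contained result (Lemma \ref{lem-Pierre-2}) for functions $g(x)=F(x)e^{-1/x}$ with $F$ holomorphic and real on the real axis: $g^{(n)}(x)$ is represented by Cauchy's formula over circles centered at $x$ with radius tending to $x$, the resulting oscillatory integral is evaluated at the special point $x=1/(2n)$, and the stationary phase method (stationary point $\theta=\pi/2$ of the phase $\tan(\theta/2)-\theta$) gives $g^{(n)}(1/(2n))$ of size $(1/2)^n(2n)!\,n^{-r-1/2}$ times $\cos\left(n(1-\pi/2)+(r+1)\pi/4\right)+o(1)$; since the limit points of this cosine sequence fill $[-1,1]$, no bound $C'(R')^n(2n)!$ with $R'<1/2$ can hold. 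Your proposal instead keeps everything algebraic until the last step: the Laguerre closed form for $\partial_x^n e^{-t/x}$, orthogonality, and integration by parts reduce $G^{(n)}$ exactly to boundary terms involving $L^{(k)}_{n-1-k}(R/x)$, and Plancherel--Rotach asymptotics then produce, for every large $n$, a point $y^*$ within $O(1)$ of $2n$ (i.e.\ $x^*\approx R/(2n)$, the same critical scale as the paper's $x=1/(2n)$) where the dominant term is bounded below. Both proofs need genuine oscillatory asymptotics to capture the constant $2$ rather than $4$; yours outsources them to classical Laguerre theory (Szeg\H{o}), while the paper performs the stationary-phase computation directly and for an arbitrary holomorphic prefactor $F$. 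A small bonus of your route: the lower bound holds for every large $n$, whereas the paper's cosine factor can be small for particular $n$, so its contradiction is obtained only along a subsequence (which of course suffices).

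Two points in your sketch need repair. First, the bulk envelope of $e^{-y/2}|L_{n-1}(y)|$ at $y\approx 2n$ is of order $n^{-1/2}$, not $n^{-1/4}$; this is harmless, since only exponential rates matter. Second --- and this is a genuine gap as written --- the claim that the $k=0$ boundary term dominates because ``the others carry an extra factor $n^{-k}$'' cannot be justified by prefactor counting alone. The $k$-th term is (up to a common factor) $\frac{p!\,(n-1-k)!}{(p-k)!}L^{(k)}_{n-1-k}(y)$, and the classical crude bound $|L^{(k)}_{n-1-k}(y)|\le\binom{n-1}{k}e^{y/2}$ contributes a factor of order $n^{k}$ that exactly cancels the gain $(n-1-k)!/(n-1)!\sim n^{-k}$; the resulting estimate exceeds the extremal size $\asymp p!\,(n-1)!\,n^{-1/2}e^{y/2}$ of the $k=0$ term by a factor $\sqrt{n}$, so crude bounds would allow the lower-order terms to swamp your chosen extremum. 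To close this, you must apply the Plancherel--Rotach bulk asymptotics to each $L^{(k)}_{n-1-k}$, $1\le k\le p$, as well: for fixed parameter $\alpha=k$ the envelope at $y\sim 2n$ is again of order $n^{-1/2}e^{y/2}$, which restores a true $n^{-k}$ separation between consecutive terms. With that amendment, your non-cancellation argument at the chosen extremum, and hence the whole proof, goes through.
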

Let us conclude this paper with some remarks. A step further would be the derivation of some exact controllability result with a continuous selection of the control in 
appropriate spaces of functions. This would be useful to derive local exact controllability results for semilinear parabolic equations in the same way as it was done 
before for the semilinear wave equation, NLS, KdV, etc.   To date, only the controllability to the trajectories was obtained for semilinear parabolic equations. The corresponding terminal states
are very regular.  Indeed, as it was noticed in \cite{AEWZ,MRRautomatica,meurer}, the solution of \eqref{LL1}-\eqref{LL4} with $h_0=h_1\equiv 0$ but $\psi (.,0)=\psi _0\in L^2(0,1)$ is such that 
\[
\psi (.,t)\in G^{\frac{1}{2}} ([0,1]),\quad 0<t\le T.
\]
In particular, $\psi (.,T)$ is an entire (analytic) function (that is, $\psi (.,T)\in H(\C )$). More precisely, the link between the Gevrey regularity and the order of growth of the entire function is revealed in the
\begin{proposition}
\label{prop50}
Let $T>0$ and $f\in G^\sigma ( [0,T] )$, $\sigma \ge 0$, and set 
\begin{eqnarray*}
g &:=& \inf \{  s\ge 0;\ f\in G^s ([0,T]) \} ,\\
\rho &:=& \inf \{ k>0,\ \exists r_0>0,\ \forall r>r_0, \ \max _{ |z|=r } | f (z) | <\exp (r^k) \} .    
\end{eqnarray*} 
Assume $g<1$. Then $f$ is an entire function of order $\rho \le (1-g)^{-1}$. If, in addition, $\rho \ge 1$, then $\rho = (1-g)^{-1}$.  
\end{proposition}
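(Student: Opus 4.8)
The plan is to relate the two quantities $g$ and $\rho$ attached to a Gevrey function $f$ by running two independent estimates: from a Gevrey bound on the Taylor coefficients of $f$ one deduces an upper bound on the growth of the entire extension, and conversely from the growth of the entire function one recovers a Gevrey bound, yielding the reverse inequality on the orders. Throughout I will work with the Taylor expansion of $f$ around a fixed point of $[0,T]$, say $t_0=0$, writing $f(z)=\sum_{n\ge 0} \frac{f^{(n)}(0)}{n!} z^n$, and I will use the classical correspondence between the order of an entire function and the decay rate of its Taylor coefficients.

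First I would prove the upper bound $\rho\le (1-g)^{-1}$. Fix any $s$ with $g<s<1$. By definition of $g$ there exist constants $C,R>0$ with $|f^{(n)}(0)|\le C\, n!^{s}/R^{n}$, so the Taylor coefficients $c_n:=f^{(n)}(0)/n!$ satisfy $|c_n|\le C\, n!^{s-1}/R^{n}$. Since $s<1$ these coefficients decay faster than any geometric rate, so the series defines an entire function agreeing with $f$ on $[0,T]$. The order of an entire function with coefficients $c_n$ is given by the standard formula $\rho=\limsup_{n\to\infty} \frac{n\log n}{-\log|c_n|}$ (see any classical reference such as Titchmarsh, already cited in the paper). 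Plugging in $-\log|c_n|\ge (1-s)\log n!+n\log R-\log C\sim (1-s)\,n\log n$ via Stirling gives $\rho\le (1-s)^{-1}$; letting $s\downarrow g$ yields $\rho\le (1-g)^{-1}$. This already proves $f$ is entire of order at most $(1-g)^{-1}$.

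For the reverse inequality, assume $\rho\ge 1$ and fix any $k$ with $\rho<k$. By definition of $\rho$ there is $r_0$ such that $\max_{|z|=r}|f(z)|<\exp(r^{k})$ for $r>r_0$. The Cauchy estimate $|c_n|\le r^{-n}\max_{|z|=r}|f(z)|$ then gives $|c_n|\le r^{-n}\exp(r^{k})$ for every $r>r_0$; optimizing over $r$ (the minimum is at $r=(n/k)^{1/k}$) produces a bound of the shape $|c_n|\le \exp\bigl(-\tfrac{n}{k}(1+o(1))\log n\bigr)$, hence $|f^{(n)}(0)|=n!\,|c_n|\le C' n!^{1-1/k}/R'^{\,n}$ for suitable constants, i.e. a Gevrey-$(1-1/k)$ bound at $t_0=0$. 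The point requiring care is upgrading this coefficient bound at a single point into a uniform Gevrey estimate of $f$ on all of $[0,T]$; I would handle this by re-centering the Cauchy estimates at an arbitrary $t_0\in[0,T]$ (the entire function is bounded on compact sets, so the growth hypothesis applies uniformly after a fixed translation) and absorbing the translation into the constants. This shows $f\in G^{1-1/k}([0,T])$, so $g\le 1-1/k$, whence $k\le (1-g)^{-1}$; letting $k\downarrow\rho$ gives $\rho\le(1-g)^{-1}$ combined with the first part forces $\rho=(1-g)^{-1}$.

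The main obstacle I anticipate is the passage from a coefficient estimate at one point to a genuine Gevrey estimate valid uniformly on the compact interval $[0,T]$, since the definition of $G^\sigma([0,T])$ demands a bound on $|f^{(n)}(t)|$ for \emph{all} $t\in[0,T]$ simultaneously, not merely on $|f^{(n)}(0)|$. Resolving it cleanly relies on the fact that $f$ extends to an entire function whose growth hypothesis is translation-stable up to constants, so a single application of the Cauchy integral formula on circles of a fixed radius around each $t_0\in[0,T]$ delivers the uniform bound; the boundedness of $|f|$ on the compact annulus swept out as $t_0$ ranges over $[0,T]$ keeps all constants uniform. A secondary technical point is the careful use of Stirling's formula and the $\limsup$ order formula near the borderline $s=1$, which is precisely why the hypothesis $g<1$ is imposed and why the sharp equality is only claimed when $\rho\ge 1$.
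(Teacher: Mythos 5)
Your proposal follows essentially the same route as the paper's proof: extend $f$ to an entire function through its Taylor series at $0$ (the paper cites Rudin for the fact that the series represents $f$ on the interval; your one-line justification glosses this, but it is the same standard fact), pass from the Gevrey bound to coefficient decay to bound the order (the paper invokes Levin's two lemmas where you invoke the equivalent formula $\rho=\limsup_n \frac{n\log n}{-\log|c_n|}$), and conversely pass from the growth bound to coefficient decay via optimized Cauchy estimates to recover a Gevrey bound. The only genuinely different ingredient is the uniformity step in the converse direction: the paper differentiates the power series termwise, estimating $f^{(q)}(t)=\sum_{n\ge q}\frac{n!}{(n-q)!}\,c_n\, t^{n-q}$ with the inequality $(p+q)!\le 2^{p+q}\,p!\,q!$, which yields the bound on all of $[0,T]$ directly from the coefficient bound at $0$; your re-centered Cauchy estimates accomplish the same thing, at the cost of checking that $(r+T)^k=r^k\bigl(1+o(1)\bigr)$ as $r\to\infty$, so the translation only degrades the geometric factor. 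Both devices work.

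There is, however, a concrete error in your concluding step which, as written, invalidates the proof of the equality $\rho=(1-g)^{-1}$. From $g\le 1-\frac{1}{k}$ you deduce ``$k\le(1-g)^{-1}$'' and then ``$\rho\le(1-g)^{-1}$''. The inequality goes the other way: $g\le 1-\frac{1}{k}$ is equivalent to $\frac{1}{k}\le 1-g$, i.e. $k\ge(1-g)^{-1}$, so letting $k\searrow\rho$ gives $\rho\ge(1-g)^{-1}$. That lower bound is precisely what you need: combined with the upper bound $\rho\le(1-g)^{-1}$ from the first part, it forces equality, exactly as in the paper (which phrases the same conclusion as $g\le 1-\rho^{-1}$). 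As you literally wrote it --- two upper bounds for $\rho$ --- nothing forces equality. Note also that this corrected step is where the hypothesis $\rho\ge 1$ is genuinely used: it guarantees $1-\frac{1}{k}\ge 0$ for $k>\rho$, so that the exponent $1-\frac{1}{k}$ is admissible in the infimum defining $g$, which ranges only over $s\ge 0$. Your closing attribution of this hypothesis to ``careful use of Stirling's formula near the borderline'' misses the real obstruction: for $\rho<1$ the argument only yields $g=0$, and the equality genuinely fails (e.g. for polynomials, where $\rho=0$ and $g=0$), as the paper points out in its final remark.
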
 
For instance,  a function which is Gevrey of order $1/2$ (and not Gevrey of order less than $1/2$) is an entire function whose order is $\rho =2$. 
Thus, when dealing with reachable states, a gap in the Gevrey regularity (between $1/2$ and $1$) results in a gap in the order of growth of the entire function (between $2$ and $\infty$).
It follows that a local exact controllability for a semilinear heat equation in a space of analytic functions (if available) would dramatically improve the existing results, 
giving so far only the controllability to the trajectories, as far as the regularity of the reachable terminal states is concerned. 

Finally, the exact controllability to large constant steady  states of the dissipative Burgers equation
 \begin{eqnarray}
 \zeta _t-\zeta _{xx} +\zeta \zeta _x=0, && \quad x\in (-1,1), \ t\in (0,T), \label{LL5}\\
 \zeta (-1,t)=h_0(t),&&\quad t\in (0,T),  \label{LL6}\\
 \zeta (1,t)=h_1(t),&&\quad t\in (0,T), \label{LL7}\\
 \zeta (x,0)=\zeta _0(x),&& \quad x\in (-1,1), \label{LL8} 
 \end{eqnarray}
was derived in \cite{coron} from the null controllability of the system \eqref{LL1}-\eqref{LL4} (with $0\ne \psi _0\in L^2(-1,1)$) and Hopf transform $\zeta =-2\psi _x/\psi $.
It would be interesting to see whether Hopf transform could be used to derive an exact controllability result for \eqref{LL5}-\eqref{LL8}. 
Another potential application of the flatness approach (which yields {\em explicit} control inputs) is the investigation of the cost of the control (see \cite{lissy}).  
 \section*{Appendix}    
 \subsection{Proof of Theorem \ref{thm50}.}
 Pick any $n\geq 2$ and set $G_n(x) := G(x) - \sum_{p=2}^{n} a_p x^p$. It is clear that 
\[ G^{(n)}(x)=  a_n n! + G_n^{(n)}(x) \quad \forall x>0.\]
Since $x^p = \frac{1}{(p-1)!} \int_{0}^{+\infty} t^{p-1} \exp(-t/x) dt$, we have, with the series defining $\phi$,
$$
G_n(x)=
\underbrace{ \int_0^R \phi_n(t) \exp(-t/x)~ dt }_{= L_n(x)}-\underbrace{\int_R^{+\infty}\xi_n(t) \exp(-t/x)~ dt }_{= K_n(x)}
$$
where $\phi_n(t) := \sum_{p\geq n+1}  \frac{a_p t^{p-1}}{(p-1)!}$ and  $\xi_n(t):= \sum_{2 \leq p\leq n}  \frac{a_p t^{p-1}}{(p-1)!}$.
Thus
\begin{equation}\label{eq:Gn}
G^{(n)}(x)=a_n n! + L^{(n)}_n(x) - K^{(n)}_n(x).
\end{equation}
By Cauchy formula, we have 
 \begin{align*}
 L^{(n)}_n(x) &= \int_0^R \phi _n(t) \partial _x^n  [ \exp (-t / x) ]  dt = \frac{n!}{2 i\pi } \int_0^R \int_\Gamma \frac{\phi_n(t) \exp(-t/z) }{(z-x)^{n+1}}~dzdt, \\
 K^{(n)}_n(x) &= \int_R^\infty  \xi _n(t) \partial _x^n [ \exp (-t / x) ]  dt =  \frac{n!}{2 i\pi } \int_R^\infty  \int_\Gamma \frac{\xi _n(t) \exp(-t/z) }{(z-x)^{n+1}}~dzdt, 
\end{align*}
where $\Gamma$ is any (smooth) closed path  around $z=x$ and not around the essential singularity $0$.
Consider the following family of circles centered at $x$ and with radius  $(1-\epsilon)x$ (with $0<\epsilon <1$):
 $$
 \Gamma_\epsilon := \left\{ x+x(1-\epsilon) e^{i\theta}~|~\theta\in[-\pi,\pi] \right\}.
 $$
Letting $\epsilon \searrow 0$, we infer from Lebesgue dominated convergence theorem that  
\begin{align*}
  L^{(n)}_n(x)& = \tfrac{n!  }{2\pi  x^n } \int_{0}^{R} \int_{-\pi}^{\pi} \phi_n(t)  \exp\left(-\frac{t}{2 x}\right) \exp\left(i \left( \frac{t }{2x}\tan(\theta/2) - n \theta\right)\right) d\theta ~dt,\\
  K^{(n)}_n(x)& = \tfrac{n!  }{2\pi  x^n } \int_{R}^{+\infty} \int_{-\pi}^{\pi} \xi_n(t)  \exp\left(-\frac{t}{2 x}\right) \exp\left(i \left( \frac{t }{2x}\tan(\theta/2) - n \theta\right)\right) d\theta ~dt.
\end{align*}

Since $|\xi_n(t)| \leq  C \sum_{p=2}^{n} (p / R_0) (t/ R_0)^{p-1}$, we have for any $t\geq R$
$$
|\xi_n(t)|\leq (C/ R_0)  (t/R)^{n-1} \sum_{p=2}^{n} p (R/ R_0)^{p-1} \leq   (C/R_0)  (t/R)^{n-1} (1-R/R_0)^{-2}.
$$
Consequently
$$
 |K^{(n)}_n(x)| \leq C \tfrac{n! R R_0 }{ (R_0 - R)^2 x^n  } \int_{R}^{+\infty}   (t/R)^{n-1} \exp\left(-\frac{t}{2 x}\right)~dt/R.
$$
With $ \int_{R}^{+\infty}   (t/R)^{n-1} \exp\left(-\frac{t}{2 x}\right)~dt/R\leq (n-1)! (2x/R)^{n}$, we obtain
\begin{equation}\label{eq:Kn}
 |K^{(n)}_n(x)| \leq  C \tfrac{R R_0 }{ (R_0-R)^2 } n! (n-1)!  (2/R)^{n} .
\end{equation}

On the other hand, we have
\begin{eqnarray*}
| L^{(n)}_n(x)|  &\leq& 
 \displaystyle \frac{n!  }{x^n } \int_{0}^{R} \sum_{p \geq n+1} |a_p| \frac{ t^{p-1}}{(p-1)!}  \exp\left(-\frac{t}{2 x}\right)   ~dt \\
&\leq& \displaystyle \frac{C n!  }{x^n } \int_{0}^{R} \sum_{p \geq n+1} p  \frac{ t^{p-1}}{R_0^p}  \exp\left(-\frac{t }{2 x}\right)   ~dt \\
&=& \displaystyle \frac{C R n!  }{R_0 } \sum_{p \geq n+1} \int_{0}^{1} p \tau ^{p-1} (R/R_0)^{p-1}  x^{-n}\exp\left(-\frac{R\tau}{2 x}\right)   ~d\tau .
\end{eqnarray*}
Since $ x^{-n}\exp\left(-\frac{R\tau}{2 x}\right)  \leq C\sqrt{n}  \left(\frac{2n}{eR\tau}\right)^n$, we obtain
$$
| L^{(n)}_n(x)| \leq \tfrac{C R n!  }{R_0 } \left(\frac{2n}{eR}\right)^n \sum_{p \geq n+1} \int_{0}^{1} p \tau ^{p-1-n} (R/R_0)^{p-1} ~d\tau .
$$
Combined with 
\[ 
\sum_{p \geq n+1} \int_{0}^{1} p \tau ^{p-1-n} (R/R_0)^{p-1} ~d\tau = \sum_{p\geq n+1} \frac{p}{p-n} (R/R_0)^{p-1}\leq \frac{(n+1) (R/R_0)^{n}}{1-R/R _0}, \] 
this yields 
\begin{equation}\label{eq:Ln}
 | L^{(n)}_n(x)| \leq \tfrac{C R  }{R_0 - R } n! (n+1)\sqrt{n}  \left(\frac{n}{e}\right)^n  \left(\frac{2}{R_0}\right)^n.
\end{equation}

From~\eqref{eq:Gn},~\eqref{eq:Kn} and~\eqref{eq:Ln} and Stirling formula, we conclude that there exists some constant $C' >0$ such that
$$
|G^{(n)}(x)| \leq C' (n!)^2 (2/R)^n, \quad \forall x\in (0,+\infty). 
$$

\subsection{Proof of Theorem \ref{thm50a}.} 
With this choice of the sequence $(a_n)_{n\ge 2}$, we have (after some integrations by parts)
\[
G(x)= a_p    \int_0^R \frac{t^{p-1}}{(p-1)!}  \exp (-t/x)dt = a_px^p + P(x)e^{-R/x}
\]
where $P(x)$ is a polynomial function with real coefficients and of degree $p$. For $n>p$, the  derivative of order $n$ of $G(x)$ 
and  of $P(x)e^{-R/x}$ coincide. Letting $g(x)=P(Rx)e^{-1/x}$, we have that $P(x)e^{-R/x}=g(x/R)$ and hence for $n>p$
\[
G^{ (n) } (x)  = R ^{-n} g^{(n)} (x/R).
\]
To conclude, we need the following lemma, which is of interest in itself. 

\begin{lemma}
\label{lem-Pierre-2}
\label{lem:Pexp}
 Let  $F\neq 0$  be an holomorphic function without singularity in the closed disk of radius $x^*>0$ centered at $0$. 
 Assume also that $F$  take real values on the real segment $[-x^*,x^*]$. Consider the $C^\infty$  function 
 $g :x\in [0,x^* ] \mapsto F(x)e^{-1/x}\in \R$. Then there exists a number $C>0$ such that  
 \begin{equation}
 \label{PL1}
  |g^{(n)}(x)| \leq  C  \left(\frac{1}{2}\right)^n (2n)! , \quad \forall n\in \N , \ \forall x\in [0,x^*].
\end{equation}  
   Moreover, there does not exist a pair $(C',R')$ with $C'>0$ and  $R' \in (0,\frac{1}{2})$ such that  
  \begin{equation}
  \label{PL2}
  |g^{(n)}(x)| \leq  C' \left(R'\right)^n (2n)! ,  \quad \forall n\in \N , \ \forall x\in [0,x^*].
  \end{equation}
\end{lemma}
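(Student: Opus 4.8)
The plan is to treat the two assertions separately, reducing both to the model function $h(x):=e^{-1/x}$, since $F$ is analytic on $[-x^*,x^*]$ and perturbs the estimates only by Gevrey-$1$ (hence harmless) factors. Because $F$ is holomorphic on a neighbourhood of the closed disk, the Cauchy inequalities give constants $M_F,\rho>0$ with $|F^{(j)}(x)|\le M_F\,j!/\rho^{j}$ for all $j\ge 0$ and all $x\in[0,x^*]$; thus $F\in G^1([0,x^*])$. The whole problem is therefore to pin down sharply the growth of the derivatives of $h$.

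For the upper bound \eqref{PL1} I would first estimate $h^{(m)}$ by a Cauchy integral along the family of circles used in the proof of Theorem \ref{thm50}, namely the limit (as $\epsilon\searrow0$) of the circles centred at $x$ of radius $(1-\epsilon)x$. On the limiting circle $z=x(1+e^{i\theta})$ one computes $1/z=\frac{1}{2x}\bigl(1-i\tan(\theta/2)\bigr)$, so that $|e^{-1/z}|\equiv e^{-1/(2x)}$ is \emph{constant}; since $e^{-1/z}$ is entire off the origin, this circle is admissible for every $x\in(0,x^*]$ and
\[
|h^{(m)}(x)|\le \frac{m!}{2\pi x^m}\int_{-\pi}^{\pi}e^{-1/(2x)}\,d\theta=\frac{m!\,e^{-1/(2x)}}{x^m}\le m!\,\Bigl(\frac{2m}{e}\Bigr)^{m},
\]
where the last step uses $\sup_{x>0}e^{-1/(2x)}x^{-m}=(2m/e)^m$ (attained at $x=1/(2m)$). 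By Stirling's formula $m!(2m/e)^m\le C\,(1/2)^m(2m)!$, so $|h^{(m)}(x)|\le C(1/2)^m(2m)!$ uniformly in $x\in(0,x^*]$. Then Leibniz's rule applied to $g=Fh$, together with the elementary inequalities $\binom{n}{j}j!=n!/(n-j)!$ and $(2n-2j)!/(2n)!\le\bigl((n-j)!/n!\bigr)^2$, reduces the sum $\sum_j\binom nj|F^{(j)}|\,|h^{(n-j)}|$, after factoring out $(1/2)^n(2n)!$, to a series bounded by $\frac{(2/\rho)^n}{n!}\sum_{\ell\le n}(\rho/2)^\ell \ell!$, which is uniformly bounded in $n$. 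This gives \eqref{PL1} on $(0,x^*]$, and the estimate extends to $x=0$ by continuity (both sides vanish there).

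For the sharpness \eqref{PL2} the point is that the exponential rate $1/2$ is actually \emph{attained}: I would prove $\limsup_{n}\bigl(\sup_x|g^{(n)}(x)|/(2n)!\bigr)^{1/n}\ge 1/2$, which is incompatible with any bound $|g^{(n)}(x)|\le C'(R')^n(2n)!$ with $R'<1/2$. I would work at the scale $x_n:=1/(2n)$ and analyse $g^{(n)}(x)=\frac{n!}{2\pi i}\oint\frac{F(z)e^{-1/z}}{(z-x)^{n+1}}\,dz$ by steepest descent: after rescaling $z=\zeta/n$, the phase $-1/z-n\log(z-x_n)$ has two complex-conjugate saddle points $\zeta_\pm=(1\pm i)/2$, and the deformation through them yields a contribution of modulus-rate exactly $1/2$, of the form $(\text{polynomial in }n)\cdot(1/2)^n(2n)!\cos(\omega n+\varphi)$. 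Equivalently one may use the closed form $h^{(n)}(x)=(-1)^{n-1}(n-1)!\,x^{-(n+1)}L^{(1)}_{n-1}(1/x)\,e^{-1/x}$ together with the Plancherel--Rotach (oscillatory) asymptotics of $L^{(1)}_{n-1}$ at the interior point $1/x=2n$. Taking the supremum of $|g^{(n)}|$ over a shrinking $x$-window around $x_n$ captures the amplitude of the oscillation and neutralises the factor $\cos(\omega n+\varphi)$; since $F$ is analytic and $F\not\equiv0$ we have $F(x_n)\sim c_k x_n^k\ne0$, and Leibniz's rule shows the term $F(x_n)h^{(n)}(x_n)$ dominates (the others are smaller by powers of $1/n$), so the rate $1/2$ survives for $g$.

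The main obstacle is precisely this lower bound. A soft argument — bounding $|g(x)|$ by the flatness of $g$ at $0$ through Taylor's remainder $|g(x)|\le \frac{x^n}{n!}\sup_{[0,x]}|g^{(n)}|$ and optimising in $n$ — only yields $|g(x)|\le C''\exp\!\bigl(-\frac{1}{4R'x}\bigr)$, which, confronted with the true asymptotics $|g(x)|\sim|c_k|x^k e^{-1/x}$, rules out merely $R'<1/4$; the sharp constant $1/2$ is lost by a factor $2$. Recovering it genuinely requires the refined asymptotics above, whose delicate points are to justify the steepest-descent contour deformation while staying inside the disk where $F$ is holomorphic (legitimate since the saddles sit at $|z|\sim1/n\to0$), and to ensure the two conjugate saddle contributions do not cancel — handled by taking the supremum over a small $x$-interval rather than a single point.
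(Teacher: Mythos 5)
Your treatment of the upper bound \eqref{PL1} is correct and essentially reproduces the paper's argument: the same degenerating circles $\Gamma_\epsilon$ of radius $(1-\epsilon)x$ centred at $x$, the same observation that $\left|e^{-1/z}\right|=e^{-1/(2x)}$ on the limiting circle, the same optimisation $\sup_{x>0}x^{-n}e^{-1/(2x)}=(2n/e)^n$ and Stirling comparison. The only difference is cosmetic: you apply Cauchy's formula to $e^{-1/z}$ alone and restore $F$ by Leibniz (your combinatorial bound does close up, since the sum $\frac{(2/\rho)^n}{n!}\sum_{\ell\le n}\ell!\,(\rho/2)^\ell$ is indeed uniformly bounded), whereas the paper applies Cauchy directly to $g=Fe^{-1/z}$ and simply bounds $|F|\le\|F\|_\infty$ inside the integral, which is shorter.

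The sharpness part \eqref{PL2}, however, contains a genuine gap, located exactly at the claim ``Leibniz's rule shows the term $F(x_n)h^{(n)}(x_n)$ dominates (the others are smaller by powers of $1/n$)''. This is false whenever $F(0)=0$. Write $F(z)=z^rQ(z)$ with $Q(0)\neq0$ and $r\ge1$ (you call this order $k$). At $x_n=1/(2n)$ one has $|F^{(j)}(x_n)|\asymp n^{j-r}$ for $0\le j\le r$, while $h^{(n-j)}(x_n)$ is itself an oscillatory quantity of amplitude $\asymp (n-j)!\,e^{-n}(2n)^{n-j}n^{-1/2}$; multiplying by $\binom{n}{j}\approx n^j/j!$ shows that \emph{every} term $j=0,1,\dots,r$ of the Leibniz sum has the same size $\asymp n^{-r-1/2}(1/2)^n(2n)!$, each with its own phase. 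No single term dominates, and a priori the coherent sum of these comparable oscillatory contributions could cancel; ruling that out \emph{is} the content of the lower bound, and it cannot be done term by term. This case is not peripheral: the lemma allows $F(0)=0$, and in the paper's own application (Theorem \ref{thm50a}) one has $F(x)=P(Rx)$ with $P(0)=0$, i.e.\ $r\ge1$. The paper's proof avoids the issue by never splitting off $F$: it keeps $F$ inside the stationary-phase integral, factors $F\bigl(\tfrac{1+e^{i\theta}}{2n}\bigr)=\bigl(\tfrac{1+e^{i\theta}}{2n}\bigr)^r Q\bigl(\tfrac{1+e^{i\theta}}{2n}\bigr)$, and reads off a single leading term with the nonzero complex amplitude $Q(0)\bigl(\tfrac{1+i}{2n}\bigr)^r$, which yields the amplitude $\sim|Q(0)|\,2^{-r/2}n^{-r-1/2}$ and the shifted phase in $\cos\bigl(n(1-\pi/2)+(r+1)\pi/4\bigr)$. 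Your steepest-descent formulation (with $F$ kept in the amplitude) can be repaired in exactly this way, but the dominance argument as written would fail. A secondary, lesser point: your device of taking the supremum of $|g^{(n)}|$ over a shrinking window in $x$ to neutralise the cosine is a plausible alternative, but it needs a stationary-phase expansion uniform in the parameter $x$, which you only assert; the paper avoids this entirely by evaluating at $x=1/(2n)$ and using that $n^{r+1/2}(2R')^n\to0$ while the limit points of $\cos\bigl(n(1-\pi/2)+(1+r)\pi/4\bigr)$ fill $[-1,1]$.
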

\noindent
{\em Proof of Lemma \ref{lem-Pierre-2}.} 
Since $g$ is an holomorphic function without singularity in a neighborhood of the real segment $[x^*/2,x^*]$, we infer from Cauchy
formula that for some constants $K,r>0$ we have 
\[
|g^{n)}(x)| \le \frac{K}{r^n} n!, \quad \forall n\in \N, \ \forall x\in [x^*/2, x^*]. 
\]
Therefore, it is sufficient  to prove the lemma for $x\in(0,x^*/2]$. (Note that all the derivatives of $g$ vanish at $x=0$.)
  
 Take $x\in(0,x^*/2]$. By the Cauchy formula
 $$
 g^{(n)}(x)= \frac{n!}{2 i\pi } \int_\Gamma \frac{g(z)}{(z-x)^{n+1}}~dz
$$
where $\Gamma$ is a closed path  around $z=x$, but not around the essential singularity $0$, and inside the disk of radius $x^*$ and centered at $0$. 
Consider the following family of circles centered at $x$ and  of radius  $(1-\epsilon)x$ with  $\epsilon$ tending to $0^+$:
 $$
 \Gamma_\epsilon= \left\{ x+x(1-\epsilon) e^{i\theta}~|~\theta\in[-\pi,\pi] \right\}.
 $$
 We have that
 \begin{equation} \label{eq:gn}
   g^{(n)}(x)= \frac{n!}{2\pi (1-\epsilon)^n x^n } \int_{-\pi}^{\pi} F\left( x+x(1-\epsilon) e^{i\theta}\right) e^{\frac{-1}{x(1+(1-\epsilon)e^{i\theta})}}e^{-i n \theta} d\theta
 .
 \end{equation}
Since
$$
\frac{-1}{x(1+(1-\epsilon)e^{i\theta})} =
 \frac{-1}{x} ~\frac{1+(1-\epsilon)\cos\theta - i (1-\epsilon)\sin\theta)}{1+(1-\epsilon)^2+2(1-\epsilon)\cos\theta}
$$
we have, for each $\theta\in (-\pi,\pi  )$ and $x\in (0,x^*/2) $,
$$
\lim_{\epsilon \mapsto 0^+} \frac{-1}{x(1+(1-\epsilon)e^{i\theta})}
=  \frac{-1}{2x} + i \frac{\sin\theta}{2x(1+\cos\theta)}=  \frac{1}{2x} (-1 + i \tan(\theta/2)) 
.
$$
Moreover
 $$
\forall \theta \in[-\pi,\pi], \forall \epsilon\in[0,1/2],  ~\forall x\in[0,x^*/2],~ \left| F\left( x+x(1-\epsilon) e^{i\theta}\right) e^{\frac{-1}{x(1+(1-\epsilon)e^{i\theta})}} \right|\leq \| F \| _\infty
$$
where $ \| F\| _\infty = \sup _{|z|\le x^* } | F (z)|$.  
Using Lebesgue's dominated convergence theorem, we can take the limit as $\epsilon$ tends to $0$ in~\eqref{eq:gn} to get
\begin{equation} \label{eq:gn0}
   g^{(n)}(x)= \frac{n! \exp\left(-\frac{1}{2 x}\right) }{2\pi  x^n } \int_{-\pi}^{\pi} F \left( x\left(1+ e^{i\theta}\right)\right)\exp\left(i \left( \frac{\tan(\theta/2)}{2x} - n \theta\right)\right) d\theta
 .
 \end{equation}
Consequently
$$
  |g^{(n)}(x)|\leq \| F  \| _\infty \frac{n! \exp\left(-\frac{1}{2 x}\right) }{x^n }
$$
But the maximum of $x\mapsto x^{-n} \exp\left(-\frac{1}{2 x}\right)$  for $x>0$ is reached at $x= 1/(2n)$, and thus
$$
\max_{x\in[0,x^*]}   |g^{(n)}(x)| \leq \|  F \| _\infty  n! \left(\frac{2n}{e}\right)^n.
$$
Since $n! \sim \sqrt{2\pi n} (n/e)^n$, we have  that $(2n)!\sim \sqrt{4\pi n} (2n/e)^{2n}$, and hence 
\[
n! \left(\frac{2n}{e}\right)^n \sim \left(\frac{1}{2}\right)^{n+1/2} (2n)! . 
\]
This gives \eqref{PL1} with a constant $C>0$ that depends linearly on $\| F \| _\infty$.

For $x=1/(2n)\leq x^*/2$, we have
$$
 g^{(n)}\left(\frac{1}{2n}\right)= \frac{n! \left(\frac{2n}{e}\right)^n }{2\pi  } \int_{-\pi}^{\pi}   F\left( \frac{1+ e^{i\theta}}{2n}\right)\exp\left(i n \left( \tan(\theta/2)-\theta\right)\right) d\theta
 .
$$
For $n$ large we can evaluate  this oscillatory integral by using the stationary phase method.
Since $F$ takes real values on $[-x^*, x^*]$, we have that $F^{(n)}(0)\in \R$ for all $n\in \N$ and hence that
$\overline{F(z)}=F(\overline{z})$ for $|z|<x^*$. It follows that 
$$
 \int_{-\pi}^{\pi}  F\left( \tfrac{1+ e^{i\theta}}{2n}\right) \exp\left(i n \left( \tan(\theta/2)-\theta\right)\right) d\theta
 = 2 \Re\left\{  \int_{0}^{\pi}   F\left( \tfrac{1+ e^{i\theta}}{2n}\right)\exp\left(i n \left( \tan(\theta/2)-\theta\right)\right) d\theta\right\}
 .
$$
Since $F$ is holomorphic in a neighborhood of $0$, there exist an integer  $r\geq 0$ and a holomorphic  function $Q$  such that  
 $F\left( \tfrac{1+ e^{i\theta}}{2n}\right)= \left( \tfrac{1+ e^{i\theta}}{2n}\right)^r Q\left( \tfrac{1+ e^{i\theta}}{2n}\right)$ 
where $Q(0)\neq 0$. (Note that $Q(0)\in \R$.)

As far as the integral $\int_{0}^{\pi}  F\left( \tfrac{1+ e^{i\theta}}{2n}\right) \exp\left(i n \left( \tan(\theta/2)-\theta\right)\right) d\theta$
in concerned, we readily see that the phase $\phi(\theta)=\tan(\theta/2)-\theta$  is stationary at only one point on $[0, \pi]$, namely
$\bar\theta=\pi/2$. We have  
 $\phi'(\bar\theta)=0$ and $\phi''(\bar\theta)=1>0$. Thus we can write the following asymptotic approximation
\begin{multline*}
  \int_0^\pi  F\left( \frac{1+ e^{i\theta}}{2n}\right) \exp(in \phi(\theta)) d\theta 
  = Q(0) \left( \frac{1+ i}{2n}\right)^r  \left( \sqrt{\frac{2\pi}{n \phi''(\bar\theta)} }  \exp\left( i ( n \phi(\bar\theta) + \pi/4) \right)  + o(1/\sqrt{n}) \right) \\
=  \frac{Q(0)}{2^{r/2} n^r}   \left(\sqrt{\frac{2\pi}{n } } \exp\left( i ( n (1-\pi/2) + (r+1)\pi/4)\right) + o(1/\sqrt{n})\right)
\end{multline*}
 for $n$ large,  see e.g. \cite[page 279]{BenderOrszagBook} or \cite{hormander1}. Then there exist two functions of $n$ vanishing at infinity, $\eta$ and $\mu$, such that
 $$
  g^{(n)}\left(\frac{1}{2n}\right) =     2  \frac{Q(0)}{2^{r/2} n^r} \sqrt{\frac{\pi}{n } }\big(1+\eta(n)\big) \left(\frac{1}{2}\right)^{n} (2n)!   \big( \cos\left(  n (1-\pi/2) + (r+1)\pi/4 \right)  + \mu(n)\big)
  .
 $$
 If we could find  $R' \in (0,\frac{1}{2})$ and $C'>0$ such that  for all  $n\in\N$ 
 and all $x\in[0,x^*/2] $, $|g^{(n)}(x)| \leq  C  \left(R'\right)^n (2n)!  $,
then  we would have
 $$
\left| 2  \sqrt{\pi } \Big(1+\eta(n)\Big)  \bigg( \cos\left(  n (1-\frac{\pi}{2}) + \frac{(1+r)\pi}{4}\right)  + \mu(n)\bigg) \right|
 \leq  \frac{C'2^{r/2}}{Q(0)}n^{r+1/2} (2R')^n
 .
 $$
 Since $\lim_{n\mapsto +\infty} n^{r+1/2} (2R')^n =0$, we would obtain a  contradiction to  the fact that the set of limit points of the sequence $\cos\left(  n (1-\frac{\pi}{2}) + \frac{(1+r)\pi}{4}\right)$ is $[-1,1]$.
This completes the proof of Lemma \ref{lem-Pierre-2}. \qed

Let us go back to the proof of Theorem \ref{thm50a}. Apply Lemma \ref{lem-Pierre-2} with $x^*=1$ and $F(x)=P(Rx)$. 
If there exists a pair $(\hat C,\hat R)$ with $\hat C>0$, $\hat R>R$ and such that
\[
|G^{ (n) } (x) | \le \hat C (n! )^2 (2/ \hat R)^n, \quad \forall x\in (0,R),
\]
then, setting $\hat R=\rho R$ with $\rho >1$,  we obtain that for all $x\in (0,1)=(0,x^*)$ and all $n\in \N$ 
\begin{eqnarray*}
| g^{ (n) } (x) | &=& R^n | G^{(n)} (Rx)| \\
&\le &  \hat C (n!)^2  \left(  \frac{2}{\rho} \right) ^n  \\
&\le& Const. \frac{\sqrt{n}  (2n)!}{2^{2n}}    \left(  \frac{2}{\rho} \right) ^n\\
&\le&  Const. \frac{\sqrt{n}  (2n)!}{ (2\rho )^n} \\ 
&\le&C' (R')^n (2n)!
\end{eqnarray*}
for some  $C',R'$ with $(2\rho )^{-1}<R'<2^{-1}$ and $C'>0$. But this is not possible, according to Lemma \ref{lem-Pierre-2}. The proof of Theorem  \ref{thm50a}
is achieved. \qed 

 \subsection{Proof of Proposition \ref{prop50}.}    
 We introduce some notations borrowed from  \cite{levin-book}. If an inequality $f(r)<g(r)$ holds for sufficiently large  values of $r$ (i.e. for all $r>r_0$ for some $r_0\in \R$), we shall write
 $f(r)\asy g(r)$ (``as'' for {\em asymptotic}).  
 
 Let $f(z)=\sum_{n=0}^\infty c_nz^n$ be an entire function ($f\in H(\C )$). Let $M_f(r):=\max_{|z|=r} |f(z)|$. Then the order (of growth) of the entire function $f$ is 
 \[
 \rho = \inf \{ k > 0; \ M_f(r) \asy \exp (r^k) \} \in [0,\infty ].  
 \]   
 The following results will be used thereafter.
 \begin{lemma} \cite[Lemma 1 p. 5]{levin-book}.
 \label{lem51}
 If the asymptotic inequality
 \begin{equation}
 \label{ABC1}
 M_f(r) \asy e^{Ar^k}
 \end{equation}
 is satisfied, then 
 \begin{equation}
 \label{ABC2}
 |c_n| \asy \left( \frac{eAk}{n}\right) ^\frac{n}{k} 
 \end{equation}
 \end{lemma}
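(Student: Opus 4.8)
The plan is to combine the Cauchy estimates for the Taylor coefficients with an elementary one-variable optimization over the radius. First I would recall that, for every $r>0$, the Cauchy integral formula $c_n=\frac{1}{2\pi i}\int_{|z|=r}f(z)z^{-n-1}\,dz$ gives the unconditional coefficient bound
\[
|c_n|\le \frac{M_f(r)}{r^n},\qquad \forall r>0,\ \forall n\ge 0.
\]
This reduces the lemma to choosing $r$ appropriately, once $M_f(r)$ is replaced by its hypothetical majorant.

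Next I would invoke the hypothesis \eqref{ABC1}, which provides some $r_0>0$ with $M_f(r)<e^{Ar^k}$ for all $r>r_0$. Substituting gives $|c_n|\le \exp(Ar^k-n\ln r)$ for $r>r_0$, so I would minimize the exponent $\varphi(r):=Ar^k-n\ln r$ over $r>0$. The equation $\varphi'(r)=Akr^{k-1}-n/r=0$ has the unique critical point $r_n:=(n/(Ak))^{1/k}$, which is the global minimum (one checks that $\varphi\to+\infty$ at both ends of $(0,\infty)$ and that $\varphi''(r_n)=kn/r_n^2>0$). A short computation gives $Ar_n^k=n/k$ and $n\ln r_n=\frac{n}{k}\ln\frac{n}{Ak}$, whence
\[
\varphi(r_n)=\frac{n}{k}\Bigl(1-\ln\frac{n}{Ak}\Bigr)=\frac{n}{k}\ln\frac{eAk}{n},
\]
and therefore $|c_n|\le e^{\varphi(r_n)}=\bigl(eAk/n\bigr)^{n/k}$, which is exactly \eqref{ABC2}.

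The one point requiring care --- and the main, albeit mild, obstacle --- is the matching of the two asymptotic regimes. The hypothesis is available only for $r>r_0$, while the optimal radius $r_n=(n/(Ak))^{1/k}$ depends on $n$; the substitution at $r=r_n$ is legitimate only once $r_n>r_0$. The key observation is that $r_n\to+\infty$ as $n\to\infty$, so there is some $n_0$ with $r_n>r_0$ for every $n>n_0$, and for such $n$ the bound above is valid. This is precisely why the conclusion \eqref{ABC2} is stated asymptotically (for $n>n_0$) rather than for every $n$. Beyond this bookkeeping, the argument uses only the Cauchy estimate and elementary calculus; no finer complex-analytic tool is needed.
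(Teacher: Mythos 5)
Your proof is correct. Note that the paper does not prove this lemma at all---it quotes it verbatim from Levin's book---and your argument (the Cauchy estimate $|c_n|\le M_f(r)/r^n$, minimization of $Ar^k-n\ln r$ at $r_n=(n/(Ak))^{1/k}$, and the observation that $r_n\to\infty$ so the asymptotic hypothesis \eqref{ABC1} is eventually applicable) is precisely the classical proof given in that reference, including the point that the conclusion can only hold asymptotically in $n$.
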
 
 
 \begin{lemma} \cite[Lemma 2 p. 5]{levin-book}.
 \label{lem52}
If the asymptotic inequality \eqref{ABC2} is satisfied, then 
\begin{equation}
\label{ABC3}
 M_f(r) \asy e^{(A+\varepsilon ) r^k}\quad  \forall \varepsilon >0. 
\end{equation}
\end{lemma}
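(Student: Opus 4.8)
The plan is to bound the maximum modulus directly by the sum of the moduli of the Taylor terms and then to estimate that sum by locating its dominant term. Since $f(z)=\sum_{n\ge 0}c_nz^n$, on the circle $|z|=r$ one has the elementary bound $M_f(r)\le \sum_{n\ge 0}|c_n|r^n$. The hypothesis \eqref{ABC2} holds only for $n$ large, say $n\ge N_0$, so the finitely many terms with $n<N_0$ contribute at most a fixed polynomial $P(r)$, which will be negligible. I would thus reduce matters to estimating the tail sum $S(r):=\sum_{n\ge N_0}b_n$, where $b_n:=\left(\frac{eAk}{n}\right)^{n/k}r^n$.

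First I would show that each individual term obeys $b_n\le e^{Ar^k}$. Introducing the reduced variable $u=n/r^k$, a direct computation gives $\log b_n=\frac{r^k}{k}\,\varphi(u)$ with $\varphi(u):=u\bigl(1+\log(Ak)-\log u\bigr)$. The function $\varphi$ is concave, and $\varphi'(u)=\log(Ak/u)$ vanishes only at $u=Ak$, where $\varphi(Ak)=Ak$. Hence $\varphi(u)\le Ak$ for all $u>0$, which yields $b_n\le e^{(r^k/k)Ak}=e^{Ar^k}$ for every $n$; the maximizing index is $n\approx Ak\,r^k$, recovering the expected location of the dominant term.

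Next I would sum the $b_n$, the point being that only $O(r^k)$ indices sit in the \emph{bulk} near the peak while the rest decay geometrically. Concretely, split at $n=2eAk\,r^k$. For $n\le 2eAk\,r^k$ there are $O(r^k)$ terms, each bounded by $e^{Ar^k}$, so this part is at most $C\,r^k\,e^{Ar^k}$. For $n>2eAk\,r^k$, i.e.\ $u>2eAk$, one has $1+\log(Ak)-\log u\le-\log 2$, whence $\varphi(u)\le-u\log 2$ and $b_n\le 2^{-n/k}$; the resulting geometric tail contributes $O(1)$, uniformly in $r$. Combining, $S(r)\le C\,r^k\,e^{Ar^k}+O(1)$, and therefore $M_f(r)\le P(r)+C\,r^k\,e^{Ar^k}+O(1)$.

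Finally, for any $\varepsilon>0$ the algebraic prefactor is absorbed into the exponential: $r^k\le e^{\varepsilon r^k}$ for $r$ large, and likewise $P(r)$ and the $O(1)$ term are dominated by $e^{\varepsilon r^k}e^{Ar^k}$ once $r$ is large. This gives $M_f(r)\asy e^{(A+\varepsilon)r^k}$, which is exactly \eqref{ABC3}. The main obstacle is not any single estimate but the bookkeeping of the bulk/tail split: one must choose the cutoff so that the number of bulk terms stays polynomial in $r$ while the tail genuinely decays, and then verify that the algebraic prefactor is swallowed by the arbitrarily small gain $e^{\varepsilon r^k}$ — which is precisely why the conclusion must carry the unavoidable $\varepsilon$.
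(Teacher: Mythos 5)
Your proof is correct. The paper does not actually prove this lemma --- it quotes it verbatim from Levin's \emph{Lectures on Entire Functions} (Lemma 2, p.~5) --- and your argument (bounding $M_f(r)$ by $\sum_n |c_n| r^n$, showing each term is at most $e^{Ar^k}$ with the maximal term near $n \approx Ak\,r^k$, splitting into a bulk of $O(r^k)$ terms plus a geometric tail, and then absorbing the polynomial prefactor into $e^{\varepsilon r^k}$) is essentially the standard proof given in that reference, so there is nothing to add.
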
 
 
Let $f$ be as in the statement of Proposition \ref{prop50}. Then for all $s\in (g,1)$, there are some constants $C=C(s)>0$ and $R=R(s)>0$ such that 
\begin{equation}
\label{ABC11}
|f^{(n)} (t) | \le C \frac{(n!)^s}{R^n} \quad \forall t\in [0,T].
\end{equation}
Let $c_n := f ^{ (n) } (0) / n! $ for all $n\in \N$. Then the series $\sum_{n=0}^\infty c_nz^n$ converges for all $z\in \C$ (since $s<1$), and we have for all $z\in [0,1]$
\begin{equation}
\label{ABC12} 
f(z) = \sum_{n=0}^\infty c_nz^n
\end{equation} 
(see \cite[19.9]{rudin}).
Thus $f$ can be extended as an entire function by using \eqref{ABC12} for $z\in \C$.  Set 
\[
k := (1-s)^{-1}.
\]
It follows from \eqref{ABC11} and Stirling formula that
\[
|c_n| \le  C [(n\, !)^{1-s} R^n ]^{-1}\le C' \left( \frac{e}{n} \right) ^{\frac{n}{k}}  [n^\frac{1-s}{2} R^n ]^{-1} \asy \left( \frac{eAk}{n} \right) ^\frac{n}{k}  
\]  
for some positive constants $C,C'$, and $A$.  We infer from Lemma \ref{lem52} that 
\[
M_f (r) \asy e^{ (A+\varepsilon ) r^k} \asy e^{r^{k+\varepsilon}} \quad \forall \varepsilon >0,
\]
and hence $\rho \le k = (1-s)^{-1}$. Letting $s\searrow g$, we obtain 
\be
\label{ABC30}
\rho \le (1-g)^{-1}.
\ee

Assume in addition that $\rho \ge 1$. 
We infer from the definition of $\rho $ that 
\[
M_f(r) \asy e^{r^{\rho + \varepsilon}}\quad \forall \varepsilon >0. 
\] 
Pick any $\varepsilon >0$ and let $k:=\rho + \varepsilon  >1$. It follows from Lemma \ref{lem51} that 
\[
|c_n| \asy \left(  \frac{eAk}{n} \right) ^\frac{n}{k}  
\]
and hence 
\[
|c_n| \le C \frac{(n!)^{-1/k }}{R^n}\quad \forall n\in \N
\] 
for some positive constants $C$ and $R$. It follows that for all $t\in [0,T]$ and $q\in \N$ 
\begin{eqnarray*}
| f^{ (q) } (t) | 
&=& \vert \sum_{n\ge q} \frac{n!}{ (n-q)! } c_n z^{n-q}   \vert  \\
&\le& C \sum _{p\ge 0} \frac{ (p+q)!^{1-k^{-1}} }{p !} \frac{|z|^p}{R^{p+q}}  \\
&\le& C 2^{ks} q! ^s \sum_{p\ge 0} \frac{(2^s |z| )^p }{ p! ^{k^{-1}} }  
\end{eqnarray*}
where we have set $s:=1-k^{-1}$ and
used the inequality $(p+q)! \le 2^{p+q} p! q!$.   Since $k>0$, the series in the last inequality is convergent, and we infer that $f \in G^s([0,T])$. Thus 
$g\le s=1 - (\rho + \varepsilon )^{-1}$. Letting $\varepsilon \searrow 0$, we obtain that $g\le 1-\rho ^{-1}$. Combined with \eqref{ABC30}, this yields $\rho = (1-g)^{-1}$.  \qed 
\begin{remark}
Note that we can have $0=\rho <(1-g)^{-1}=1$ (pick e.g. any polynomial function $f\in \C [z]$).  
\end{remark}

%\bibliographystyle{abbrv}        % Include this if you use bibtex
%\bibliography{PEREF}           % and a bib file to produce the
                                 % bibliography (preferred). The
                                 % correct style is generated by
                                 % Elsevier at the time of printing.

%\appendix
%\section{A summary of Latin grammar}    % Each appendix must have a short title.
%\section{Some Latin vocabulary}         % Sections and subsections are supported
                                        % in the appendices.
\end{document}